\numberwithin{equation}{section}
\theoremstyle{plain}
\newtheorem{thm}{Theorem}[section]
\newtheorem{prop}{Proposition}[section]
\newtheorem {lema}{Lemma}[section]
\newtheorem {exa}{Example}[section]
\newtheorem {rem}{Remark}[section]
\newcommand{\dd}{\,\mathrm{d}}
\begin{document}

\begin{frontmatter}
\title{On particle-size distribution of convex similar bodies in $\mathbb{R}^3$} 
\runtitle{On particle-size distribution of convex similar bodies}

\begin{aug}
\author{\fnms{Jozef} \snm{Kise\v l\'ak}\thanksref{t1,m1,m2}\ead[label=e1]{jozef.kiselak@upjs.sk}},

\author{\fnms{Gabriela}\snm{Bal\'uchov\'a}\thanksref{m1}\ead[label=e2]{gabriela.baluchova@student.upjs.sk}}

\thankstext{t1}{Supported by the Slovak Research and Development
Agency under the contract No. APVV-16-0337.}
\runauthor{J. Kise\v l\'ak et al.}

\affiliation{P.J. \v Saf\'arik University\thanksmark{m1}, Johannes Kepler University\thanksmark{m2}}

\address{
Institute of Mathematics , P.J. \v Saf\'arik University in Ko\v sice, Slovakia \\
LIT and IFAS, Johannes Kepler University Linz, Austria\\
\printead{e1}\\
 \phantom{E-mail:\ }\printead*{e2}}

\end{aug}

\begin{abstract}\\
We give an explicit form of particle-size distributions of convex similar bodies
for random planes and random lines, which naturally generalize famous Wicksell's corpuscle problem. The results are achieved by applying the Method of Model Solutions for solving well-known Santal\'o's integral equations. We also give a partial result related to the question of existence and uniqueness of these solutions. We finally illustrate our approach on several examples.
\end{abstract}

\begin{keyword}[class=MSC]
\kwd[Primary ]{45E99}
\kwd{45D05}
\kwd{60D05}
\kwd[; secondary ]{44A15}
\kwd{6OG99}
\end{keyword}

\begin{keyword}
\kwd{Stereology}\kwd{Particle-size distribution}\kwd{Mellin transform}\kwd{Integral equation}\kwd{The method of model solutions}
\end{keyword}

\end{frontmatter}

\section{Introduction}

Stereology is originally concerned with the determination of three-dimensional structure from two-dimensional or one-dimensional observational data. It provides practical techniques for extracting quantitative information about this structure and is based on fundamental principles of geometry and statistics. It is a completely different approach from computed tomography, for which a complete set of all cross sections is needed. 
E.g. \cite{SfS} sets out the principles of stereology from a statistical viewpoint, focusing on both basic theory and practical implications.  It is an important and efficient tool in many applications of geology, metallurgy, petrology but also cell biology, petrography, materials science,  histology or neuroanatomy.

In this article the general goal is the reconstruction of particles from cross sections in $\mathbb{R}^3$. We focus only on Santal\'o's formulation, i.e. on the estimation problems concerned with ascertaining the size distribution of similarly shaped convex particles, capable of complete size specification by one size parameter and randomly distributed in a convex opaque field, see \cite{Santalo} or \cite{topics}. It also involves a famous original Wicksell’s corpuscle problem concerns the determination of the distribution of spherical particles from planar sections, see \cite{WICKSELL}. The solution was derived already in \cite{WICKSELL}, but proven much later in \cite{Ken}. This paper is organized as follows. We first present preliminaries in Section \ref{setup}. In Section \ref{PSD}, we give an overview of the problem and introduce related integral equations that we want to solve. Sections \ref{rovina} and \ref{priamka} obtain
most important result, an exact solutions for similar convex bodies for random planes and lines. The question of solvability is partially solved in Section \ref{exist}. We illustrate our approach on several explicitly solved examples, see Sections \ref{RPex} and \ref{RLex}. We put necessary technicalities in appendix.

\section{Setup and preliminaries}\label{setup}

Let $\Omega$ be non-empty set. A complex random variable $Z$ on the probability space $ (\Omega ,{\mathcal {F}},\mathbb{P})$ is a map ${\displaystyle Z\colon \Omega \rightarrow \mathbb {C} }$ such that $Z=\Re(Z)+\mathrm{i}\,\Im(Z)$, where ${\displaystyle \Re {(Z)}}$ a  ${\displaystyle \Im {(Z)}}$ are real random variables on $(\Omega ,{\mathcal {F}},\mathbb{P})$
(it can always be considered as pair of real random variables: its real and imaginary part).
The $w$-weighted $L^p$ space for measurable set $A\subseteq \mathbb{R}$ is defined as
$$L^p_w(A):=\left\{f\in  M(A)\,: \,||f||_{L^p_w(A)} <\infty \right\}, ~1\leq p<\infty,$$
where $||f||_{L^p_w(A)}:=\left(\int\limits_A w(x)|f(x)|^p\,\dd x\right)^\frac{1}{p}$, $w$ is a non-negative measurable function (weight) on $A$
and $M(A)$ the set of all Lebesgue measurable functions. 
For simplicity, if $p=1$, $w\equiv 1$ we do not write superscript or subscript. We also denote for fixed $\mu\in\mathbb{R}$ the space
$$L^p_{\{\mu\}}(\mathbb{R}_+):=\{f: \mathbb{R}_0\to\mathbb{C} ~: ||f||_{L^p_{v}(\mathbb{R}_+)}<\infty\},$$
where $v(x)=x^{\mu\,p-1}$, whereas it it known that additionally an isometry $||f||_{L^p_{\{\mu\}}(\mathbb{R}_+)}:=||f||_{L^p_{v}(\mathbb{R}_+)}=||v^{1/p}\,f||_{L^p(\mathbb{R})}$ holds. Similarly we set $L^p_{(a,b)}(\mathbb{R}_+)=\displaystyle\bigcap_{\mu\in(a,b)}L^p_{\{\mu\}}(\mathbb{R}_+), ~-\infty\leq a<b\leq \infty$.

Now, recall that $k$-th moment of real random variable $X$
is defined  as
$${\displaystyle \mathbb{E}[X^k]=\int\limits\limits\limits\limits\limits\limits\limits\limits\limits\limits\limits _{\Omega }X(\omega )^k\,\mathrm{d}\mathbb{P} (\omega )}=\int\limits\limits\limits\limits\limits\limits\limits\limits\limits\limits\limits _{\mathbb{R} }x^k\,\mathrm{d}\operatorname {F} (x),$$
if the integral exists.  Let us consider only random variables whose probability distributions are absolutely continuous, i.e. for  distribution function we have $\mathbb{P}(X\leq x)=F(x)=\int\limits\limits\limits\limits\limits\limits\limits\limits\limits\limits\limits _{-\infty }^{x}f(t)\,dt$ for $\forall x\in \mathbb{R}.$
Here $f$ is a probability density function\footnote{Note that $ { f (x) \,\mathrm{d}x} $ can be viewed as the probability that the random variable $ X $ will fall within the infinitesimal interval $ [x, x + \mathrm{d}x].$}, i.e. non-negative function  from $L(\mathbb{R})$ such that $||f||_{L(\mathbb{R})}=1$. It is good to realize that the Mellin transform defined by \eqref{MT} in appendix \ref{MT} of $f$ is $s-1$-th moment of $X$. In that case one has
${\displaystyle \mathbb {E} [X^k]=\int\limits\limits\limits\limits\limits\limits\limits\limits\limits\limits\limits _{\mathbb {R} }x^kf(x)\,\mathrm{d}x}$ and necessarily $f\in L_w(\mathbb{R}), ~w(x)=x^k$. More generally for (Borel) measurable function $r:\mathbb{R}\to\mathbb{C}$ it holds, that $r(X)=\Re(r(X))+\mathrm{i}\,\Im(r(X))$ is a complex random variable and   
$$\mathbb{E}[r(X)]=\mathbb{E}[\Re(r(X))]+\mathrm{i}\,\mathbb{E}[\Im(r(X))],$$
with
$$\mathbb{E}[r(X)]=\int\limits\limits\limits\limits\limits\limits\limits\limits\limits\limits\limits _{\mathbb{R} }r(x)\,\mathrm{d}\operatorname {F} (x)=\int\limits\limits\limits\limits\limits\limits\limits\limits\limits\limits\limits _{\mathbb{R} }\Re(r(x))\,\mathrm{d}\operatorname {F} (x)+\mathrm{i}\,\int\limits\limits\limits\limits\limits\limits\limits\limits\limits\limits\limits _{\mathbb{R} }\Im(r(x))\,\mathrm{d}\operatorname {F} (x)$$
(in the case of integrability). So for absolutely continuous complex random variable $Z$ one has 
$$\mathbb{E}[r(X)]=\int\limits\limits\limits\limits\limits\limits\limits\limits\limits\limits\limits _{\mathbb{R} }\Re(r(x))\,f(x)\,\mathrm{d}x+\mathrm{i}\,\int\limits\limits\limits\limits\limits\limits\limits\limits\limits\limits\limits _{\mathbb{R} }\Im(r(x))\,f(x)\,\mathrm{d}x.$$
We are dealing only with random variables with specific finite support, i.e.
$$\mathcal{S}_c:=\left\{f\in L(\mathbb{R})\,:f\geq 0, ||f||_{L(\mathbb{R})}= 1, ~\operatorname {supp}(f)=I_c \right\},$$
where $\operatorname {supp} (f):=\{x\in \mathbb{R}\,|\,f(x)\neq 0\}$ and $I_c=[0,c], ~I_c=(0,c], ~I_c=(0,c)$, or $I_c=[0,c)$ for some $0<c<\infty$. Obviously $\mathcal{S}_c\subset L(\mathbb{R})\subset L_{\mathrm{loc}}(\mathbb{R})$. We start with the following.

\begin{lema}\label{lem1}
Let $f\in\mathcal{S}_c$ be a density of random variable $X$. Then the moments $\mathbb{E}[X^\alpha], ~\alpha\geq 0$ does exist, are positive and
 \begin{enumerate}[i)]
\item $\mathbb{E}[X^\alpha]\leq c^\alpha$ for $\alpha\geq 0$;
     \item $\mathbb{E}[X]^\alpha\leq \mathbb{E}[X^\alpha]$ for $\alpha\geq 1$.
 \end{enumerate}
 Moreover, for $\alpha<0$ is moment a positive or $+\infty$. Also, $f=\mathcal{O}(x^{-\gamma_f}), ~x\to0^+, ~\gamma_f<1$ and $f=\mathcal{O}(x^{-\beta_f}), ~x\to\infty, ~\forall \beta_f>0$. 
\end{lema}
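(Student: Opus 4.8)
The plan is to dispatch the two enumerated inequalities together with the positivity claim by elementary estimates on the compact support, and then to treat the two asymptotic statements separately, the behaviour at $0^+$ being the only delicate point.

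First I would record that, since $\operatorname{supp}(f)=I_c\subseteq[0,c]$ and $f\geq 0$ with $\|f\|_{L(\mathbb{R})}=1$, for every $\alpha\geq 0$ the integrand $x^\alpha f(x)$ is non-negative and bounded above by $c^\alpha f(x)$ on the support. Integrating yields at once existence (the integral is dominated by the finite quantity $c^\alpha$) and part (i), namely $\mathbb{E}[X^\alpha]=\int_0^c x^\alpha f(x)\dd x\leq c^\alpha$. Positivity follows because $\|f\|_{L(\mathbb{R})}=1$ forces $f>0$ on a subset of $(0,c]$ of positive Lebesgue measure, on which $x^\alpha>0$, so the integral is strictly positive.

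For part (ii) I would invoke Jensen's inequality for the probability measure $\mathrm{d}F=f\dd x$: the map $\varphi(t)=t^\alpha$ is convex on $[0,\infty)$ for $\alpha\geq 1$, and since $X\geq 0$ almost surely and $\mathbb{E}[X]\leq c<\infty$ by (i), this gives $\mathbb{E}[X]^\alpha=\varphi(\mathbb{E}[X])\leq\mathbb{E}[\varphi(X)]=\mathbb{E}[X^\alpha]$. For $\alpha<0$ the integrand $x^\alpha f(x)$ is again non-negative on $(0,c]$, so the moment is well defined as an element of $(0,+\infty]$; its finiteness is governed entirely by the integrability of $x^\alpha f(x)$ near $0$, which is exactly why the value may be $+\infty$.

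It remains to establish the two growth estimates. The bound at infinity is immediate: because $\operatorname{supp}(f)\subseteq[0,c]$ we have $f(x)=0$ for all $x>c$, whence $f(x)=\mathcal{O}(x^{-\beta_f})$ as $x\to\infty$ for every $\beta_f>0$. The behaviour as $x\to 0^+$ is the \emph{main obstacle}, since a pointwise bound $f(x)=\mathcal{O}(x^{-\gamma_f})$ does not follow from $f\in L(\mathbb{R})$ alone. Here I would argue that the integrability $\int_0^\varepsilon f(x)\dd x<\infty$ rules out any singularity of order $\geq 1$ at the origin, as $\int_0^\varepsilon x^{-1}\dd x$ already diverges; consequently, for the admissible densities (which carry at most a power-type singularity at $0$) the exponent must satisfy $\gamma_f<1$, i.e. $\limsup_{x\to0^+}x^{\gamma_f}f(x)<\infty$ for some $\gamma_f<1$. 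I expect this last step to be precisely where the regularity of the class $\mathcal{S}_c$, or an explicit mild assumption on the singular behaviour of $f$ near $0$, is genuinely needed, in contrast with the purely formal remaining parts.
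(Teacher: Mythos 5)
On every point where you actually commit to an argument, your proof is correct and is essentially the paper's own: the domination $x^\alpha f(x)\leq c^\alpha f(x)$ on the support gives existence and part i), Jensen's inequality applied to the convex map $t\mapsto t^\alpha$ (legitimate because $\mathbb{E}[X]\leq c<\infty$ by i)) gives part ii), strict positivity follows because $f>0$ on a set of positive measure contained in $(0,c]$, for $\alpha<0$ the non-negative integrand makes the moment a well-defined element of $(0,+\infty]$, and $f\equiv 0$ beyond $c$ settles the estimate at infinity. The paper proves these parts by exactly the same means (its positivity argument is phrased as $X>0$ a.s. $\Rightarrow\mathbb{E}[X]>0$, which is the same observation).

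The step you flagged as the main obstacle and declined to prove is indeed a genuine gap --- not in your reasoning, but in the lemma and in the paper's own proof, which disposes of it with the sentence that the $x\to0^+$ bound ``follows from the fact that $f\in L(0,c)$''. That deduction is invalid: integrability near the origin implies no pointwise power bound. Concretely, take $f(x)=\frac{\log 2}{x\,(\log x)^{2}}$ for $x\in\left(0,\tfrac12\right]$ and $f(x)=0$ otherwise; this is a density in $\mathcal{S}_{1/2}$, yet $x^{\gamma}f(x)\to\infty$ as $x\to0^+$ for every $\gamma<1$, so no admissible $\gamma_f<1$ exists. Worse, $\int_0^{1/2}x^{\mu-1}f(x)\dd x=\infty$ for every $\mu<1$, so even the weaker weighted-integrability consequence that the paper later extracts from this claim (namely $f\in L_{(\gamma_f,\infty)}(\mathbb{R}_+)$ with $\gamma_f<1$, hence $f^*$ holomorphic on $\mathrm{St}(\gamma_f,\infty)$, used in the proofs of the propositions of Section \ref{exist}) fails for this $f$; densities built from ever-narrower spikes of height $e^{1/x_n}$ at points $x_n\to 0$ even defeat every power $\gamma$. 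What does hold unconditionally for $f\in\mathcal{S}_c$ is precisely what your estimates give: the bound at infinity for all $\beta_f>0$, and $f\in L_{(1,\infty)}(\mathbb{R}_+)$, since $x^{\mu-1}\leq\max\left(1,c^{\mu-1}\right)$ on $(0,c]$ for $\mu\geq 1$, so the Mellin transform exists and is holomorphic on $\mathrm{St}(1,\infty)$. To reach a strip extending to the left of $1$, as the lemma asserts, one must add a hypothesis on the behaviour of $f$ near the origin (boundedness, or domination by a pure power $x^{-\gamma}$ with $\gamma<1$ --- your ``power-type singularity'' reading, under which your divergence argument for $\int_0^\varepsilon x^{-1}\dd x$ does close the proof), or else restate the lemma and the downstream propositions with the strip $\mathrm{St}(1,\infty)$, i.e.\ with $\mu>1$. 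In short, your proposal is as complete as the statement permits, and the caveat you raised is exactly the correction the paper needs.
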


\begin{proof}
Positiveness results from the fact that for $X>0$ (a.s.) one has $\mathbb{E}[X]>0.$ 
Furthermore, for $\alpha$ we obtain $\int\limits\limits\limits\limits\limits\limits\limits\limits\limits\limits\limits_0^c x^\alpha f(x)\,\mathrm{d}x\leq c^\alpha\int\limits\limits\limits\limits\limits\limits\limits\limits\limits\limits\limits_0^c  f(x)\,\mathrm{d}x=c^\alpha.$ The estimate from below follows from Jensen's inequality. Asymptotic property for $x\to\infty$ follows from boundedness of the support and for $x\to0^+$ from the fact that $f\in L(0,c).$
\end{proof}

\section{Particle-size distribution of a body based on its cut}\label{PSD}

Consider a convex body (compact convex set with non-empty interior) $\mathbf{Q}$ v $\mathbb{R}^3$ containing a certain number of randomly placed, non-overlapping particles. Suppose that all of them are similar\footnote{By similarity on a metric space $(X,d)$ we assume a bijection $f: X\to X$ such that $\forall x,y\in X ~~d(f(x),f(y)) = \lambda\, d(x,y)$ for fixed $\lambda>0$. We call two sets similar if one is an image of the other according to similarity $f$.} to some convex body $\mathbf{K}$ and their coefficient of similarity is $\lambda>0$. We denote a particle similar to $\mathbf{K}$ as $\mathbf{K}_{\lambda}$, so that $\mathbf{K}_1=\mathbf{K}$. $V_\lambda$ denotes its volume, $F_\lambda$ surface area and $M_\lambda$ the integral of the mean curvature\footnote{I.e. the average of the principal curvatures or equivalently defined using a divergence of the unit normal.} of $\partial \mathbf{K}_\lambda$ (its boundary). We omit the subscript when $\lambda=1$. Moreover, let $H\left(\lambda\right)\mathrm{d}\lambda$ be a number of particles per unit of volume in $\mathbf{Q}$, whose coefficient of similarity is within the range $\left(\lambda, \lambda + \dd\lambda\right).$

We intersect body $\mathbf{Q}$ with a random plane $\mathbf{E}$ and a random line $\mathbf{G}$. 
Then the particles contained in $\mathbf{Q}$, intersected by  $\mathbf{E}$ ($\mathbf{G}$), are the sections, convex domains in $\mathbf{E}\cap\mathbf{Q}$
(intervals on $\mathbf{G}\cap\mathbf{Q}$). Let
$h\left(\sigma\right)\mathrm{d}\sigma$ and 
$h\left(l\right)\mathrm{d}l$ be the number of these sections of area and length in the range $\left(\sigma, \sigma + \dd\sigma\right)$ per unit area and $(l,l+\dd l)$ per unit length, respectively.
These values are random variables with densities $h(\sigma)$ and $h(l)$. However the area of $\mathbf{E}\cap\mathbf{Q}$ and 
length of $\mathbf{G}\cap\mathbf{Q}$ are also random variables with densities\footnote{$\phi\left(\sigma\right)\dd\sigma$ is the probability that the area of $\mathbf{E}\cap\mathbf{Q}$ lies between $\sigma $ and $ \sigma+\dd\sigma$. $\phi\left(l\right)\dd l$ is the probability that the length of $\mathbf{G}\cap\mathbf{Q}$ lies between $l$ and $ l+\dd l$.}  $\phi\left(\sigma\right)$ and  $\phi\left(l\right)$. Set $\phi\left(\sigma, \lambda\right)$ as a density of the area of $\mathbf{E \cap K_{\lambda}}$ and $\phi\left(l, \lambda\right)$ as a density of the length of $\mathbf{G \cap K_{\lambda}}$,  naturally $\phi\left(\sigma, 1\right) = \phi\left(\sigma\right)$. Moreover we have
\begin{equation}\label{FI}
 \phi\left(\sigma, \lambda\right) = \frac{1}{\lambda^{2}}  \phi\left(\frac{\sigma}{ \lambda^{2}} \right),
\end{equation}
\begin{equation}\label{FI2}
 \phi\left(l, \lambda\right) = \frac{1}{\lambda}  \phi\left(\frac{l}{\lambda} \right),
\end{equation}
see  \cite{Santalo} or \cite{topics}. To find a $\phi$ is not in general a simple task. However, we assume that it is given and our problem is to find $H\left(\lambda\right)$ from $h\left(\sigma\right)$ and $h\left(l\right)$ respectively. Notice that we have finite mean values of $\sigma $ and $l$ respectively as they equal the following  
$$\int\limits\limits\limits\limits\limits\limits\limits\limits\limits\limits\limits_{\mathbf{E} \cap \mathbf{K} \neq 0}\sigma(\mathbf{E \cap K})\, \mathrm{d}\mathbf{E} \Big{/} \int\limits\limits\limits\limits\limits\limits\limits\limits\limits\limits\limits_{\mathbf{E} \cap \mathbf{K} \neq 0} \mathrm{d} \mathbf{E}=\frac{2\pi\textit{V}}{\textit{M}},$$

$$
    \int\limits\limits\limits\limits\limits\limits\limits\limits\limits\limits\limits_{\mathbf{G} \cap \mathbf{K} \neq 0} l \left( \mathbf{G} \cap \mathbf{K} \right) \mathrm{d}\mathbf{G} \Big{/} \int\limits\limits\limits\limits\limits\limits\limits\limits\limits\limits\limits_{\mathbf{G} \cap \mathbf{K} \neq 0} \mathrm{d} \mathbf{G} = 2\pi V \Big{/} \left( \frac{\pi}{2} F \right) = \frac{4V}{F}.
$$

\subsection{Random plane intersection}

For spherical case, see illustrative Figure \ref{rovF}. Denote as $\sigma_{m}$ the maximal admissible value of  $\sigma$, i.e. $\displaystyle\sigma_m=\max\sigma(\mathbf{E \cap K})$. We thus have that $\phi\in \mathcal{S}_{\sigma_m}$, and therefore
\begin{equation}\nonumber
\int\limits\limits\limits\limits\limits\limits\limits\limits\limits\limits\limits\limits_{0}^{\sigma_{m}}\phi\left(\sigma\right)\dd\sigma= 1, \qquad \int\limits_{0}^{\sigma_{m}}\sigma\,\phi\left(\sigma\right)\dd\sigma=  \frac{2\pi\textit{V}}{\textit{M}}.
\end{equation}

\begin{exa}[Spherical case]\label{SCE}
In spherical case, see e.g. \cite{Santalo},  one has $\sigma_{m} = \pi$ ($M = 4\pi$) and $\lambda$ is the radius of particle $\mathbf{K_{\lambda}}$. Then
\begin{equation}
   \phi(\sigma) = \frac{1}{2\sqrt{\pi}\left( \pi - \sigma \right)^\frac{1}{2}}.
\end{equation}
or
\begin{equation}
   \phi(\sigma, \lambda) = \frac{\lambda}{2\sqrt{\pi}\left( \pi\lambda^2 - \sigma \right)^\frac{1}{2}}. 
\end{equation}
\end{exa}

In \cite{Santalo} is derived the integral relationship between $H$ and $h$. We have for planes equation, which is of interest in this work:

\begin{equation}\label{RNR}\tag{RP}
\boldmath { \int\limits\limits\limits\limits\limits\limits\limits\limits\limits\limits\limits\limits_{\left(\frac{\sigma}{\sigma_{m}}\right)^\frac{1}{2}}^{\infty} \phi\left(\frac{\sigma}{\lambda^2}\right)\frac{H\left(\lambda\right)}{\lambda}\dd\lambda = \frac{1}{\alpha} \; h\left(\sigma\right)},
\end{equation}
where  $\alpha$ is a constant (depending on e.g. $M$). Equation \eqref{RNR} is a Volterra integral equation, where function $\phi\left(\sigma\right)$ depends on the shape of the body $\mathbf{K}$ (and thus on the shape of all particles). Using the cross-sectional measurements of the body $ \mathbf {Q} $, it is possible to estimate the function $ h $, and then to derive $H$ from \eqref{RNR}. Notice if one finds $H$ then also has a distribution in $\sigma$ given by $H_1(\sigma)=H\left(\sqrt{\sigma/\sigma_m}\right)/\sqrt{4\sigma_m\sigma}$.

\begin{figure}[h!]
    \centering
    \includegraphics[width=1\textwidth]{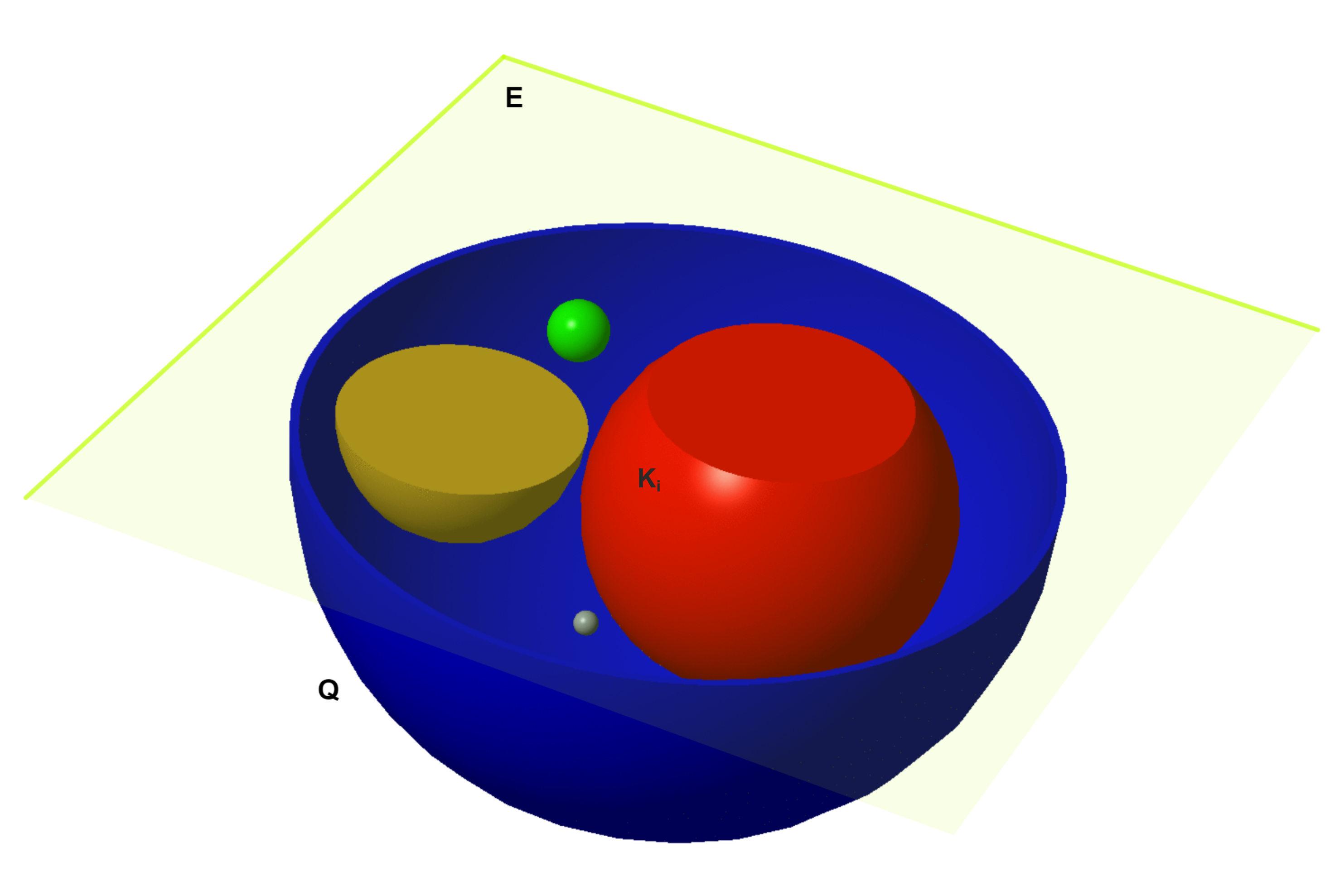}
    \caption{A section of a body $ \mathbf {Q} $ containing similar, spherical particles $ \mathbf {K_ {i}} $ by a random plane $ \mathbf {E} $.The shape of the body $ \mathbf {Q} $ can generally be different from the shape of the particles $ \mathbf {K_ {i}} $.}
    \label{rovF}
\end{figure}

\begin{exa}[Continuation of the spherical case example \ref{SCE}]

 Equation \eqref{RNR} has the form
\begin{equation}\label{SCeq}
\int\limits\limits\limits\limits\limits\limits\limits\limits\limits\limits\limits_{\left( \frac{\sigma}{\pi}\right)^{\frac{1}{2}}}^{\infty} \frac{H{\left(\lambda\right)} }{\left(\lambda^{2}\pi - \sigma \right)^{\frac{1}{2}}}\;\mathrm{d}\lambda = \frac{2\sqrt{\pi}}{\alpha}\,h{\left(\sigma\right)}.
\end{equation}

By using new variables $\pi\lambda^{2}= s$, $\mathfrak{H}\left(s\right) =  H \left( \left(\frac{s}{\pi}\right)^{\frac{1}{2}}\right)s^{-\frac{1}{2}}$ and $h_{1}{\left(\sigma\right)}= \frac{4\pi}{\alpha} h{\left(\sigma\right)}$ we transform equation
\eqref{SCeq} to

\begin{equation}
\int\limits\limits\limits\limits\limits\limits\limits\limits\limits\limits\limits_{\sigma}^{\infty} \frac{\mathfrak{H}{\left(s\right)} }{\left(s - \sigma \right)^{\frac{1}{2}}} \mathrm{d}s= h_{1}{\left(\sigma\right)},
\end{equation}

which is known Abel integral equation\footnote{Its solution has the form (if it is well defined)
\begin{equation}
\mathfrak{H}{\left(s\right)}= - \frac{1}{\pi}\int\limits\limits\limits\limits\limits\limits\limits\limits\limits\limits\limits_{s}^{\infty} \frac{h '_{1}{\left(\sigma\right)} \mathrm{d}\sigma}{\left(\sigma - s\right)^{\frac{1}{2}}}.
\end{equation}}, whereas its integral corresponds to ''1/2 integration'' but the unfolding problem is (moderately) ill posed. This yields the solution of original equation \eqref{SCeq} in the form

\begin{equation}\label{riesenie}
H_{pl}{\left( \lambda\right)}= - \frac{4\sqrt{\pi}}{\alpha}\lambda\int\limits\limits\limits\limits\limits\limits\limits\limits\limits\limits\limits_{\pi\lambda^{2}}^{\infty} \frac{h '{\left(\sigma\right)}}{\left(\sigma - \pi\lambda^{2}\right)^{\frac{1}{2}}} \mathrm{d}\sigma.
\end{equation}
Notice, that this form requires regularity of the input function $h$.
\end{exa}

\begin{rem}
Consider now that the section diameter $ r $ as random variable, whereas $\sigma = \pi r^{2}$. Let $ g \left( r \right)  = 2 \pi r h{\left(\pi r^{2}\right)}$ be a density\footnote{
In the geometric meaning $ g \left (r \right) \mathrm {d} r $, it expresses the number of particles per unit area in $ \mathbf {E} \cap \mathbf {Q} $, whose diameter in the cross-section is $( r , r + \mathrm {d} r )$.}, then 

\begin{equation}\label{ha}
      h{\left(\sigma\right)} = \frac{ g \left( r \right)}{2 \pi r}
\end{equation}
 and substituting \eqref{ha} into \eqref{riesenie} we obtain a solution
\begin{equation}
H{\left( \lambda\right)}= - \frac{2\lambda}{\pi\alpha} \int\limits\limits\limits\limits\limits\limits\limits\limits\limits\limits\limits_{\lambda}^{\infty}\left( \frac{ g \left( r \right)}{r}\right)^{'} \frac{\mathrm{d}r}{\left(r^{2} - \lambda^{2}\right)^{\frac{1}{2}}},
\end{equation}
which corresponds to  the solution of the well-known original Wicksell's corpuscule problem \cite{WICKSELL}, i.e. of the equation
\begin{equation}\label{rovnica}
    \frac{r}{1/\alpha}\int\limits_{r}^{R} \frac{ H\left(\lambda\right)}{\sqrt{\lambda^{2}-r^{2}}}\mathrm{d}\lambda = g\left(r\right), 
\end{equation}
with $R\in (0,\infty]$.
\end{rem}

\subsection{Correctness of a solution of \eqref{SCeq}, i.e. a spherical case}\label{AIR}

It can be verified that $ g $ is a density if it is $ H $. However the non-negativity of $ g $ does not guarantee the non-negativity of $ {H} $ on the entire interval. The $ g $ function must therefore satisfy the necessary condition that $ g \left (r \right)> 0 $ on the right neighborhood of zero. Notice that \cite{GorenfloVessel} they derive two conditions that must at least be met for the $ H $ function to be a density. We introduce here in notation of transformed $h.$

\begin{equation}
    \lim_{\lambda\to R} \lambda \int \limits _{\lambda}^{R}\frac{r h\left(\pi r^2\right)\mathrm{d}r}{\sqrt{r^{2}-\lambda^{2}}}= 0, \qquad \int \limits _{0}^{R}h\left(\pi r^2\right)\mathrm{d}r < \infty,
\end{equation}
whereas the first one is also necessary if $ R <\infty $ and the second one if $ R = \infty $ (it is even known that it is fulfilled if $r h(\pi r^2)\leq Ar^{-b}, ~A>0, b>1, r\geq r^*,$ so $r h(\pi r^2)\to 0$ for $r\to\infty$ sufficiently fast).
 


\subsection{Random line intersection}

Here we continue analogously for random line case. See Figure \ref{priamF}, where spherical case is illustrated. Similarly we have $\phi\in \mathcal{S}_{l_m}, ~l_m=\max l(\mathbf{G} \cap \mathbf{Q})$ and
\begin{equation}\nonumber
\int\limits\limits\limits\limits\limits\limits\limits\limits\limits\limits\limits\limits_{0}^{l_{m}}\phi\left(l\right)\dd l= 1, \qquad \int\limits_{0}^{l_{m}}l\,\phi\left(l\right)\dd l=  \frac{4\textit{V}}{\textit{F}}.
\end{equation}

Furthermore, the equation for random line intersection, see \cite{Santalo} or \cite{topics}, is
\begin{equation}\label{RNP}\tag{RL}
\boldmath{ \int\limits\limits\limits\limits\limits\limits\limits\limits\limits\limits\limits\limits_{l / l_{m}}^{\infty} \lambda \; \phi\left( \frac {l}{\lambda}\right)H\left(\lambda\right) \mathrm{d}\lambda = \frac{1}{\beta} \;h\left(l\right)},
\end{equation}
where $\beta=\frac{F}{4}$. Again relation for distribution in variable $l$ is given by $H_1(l)=H_{li}(l/l_m)/l_m$.

\begin{figure}[h!]
    \centering
    \includegraphics[width=0.8\textwidth]{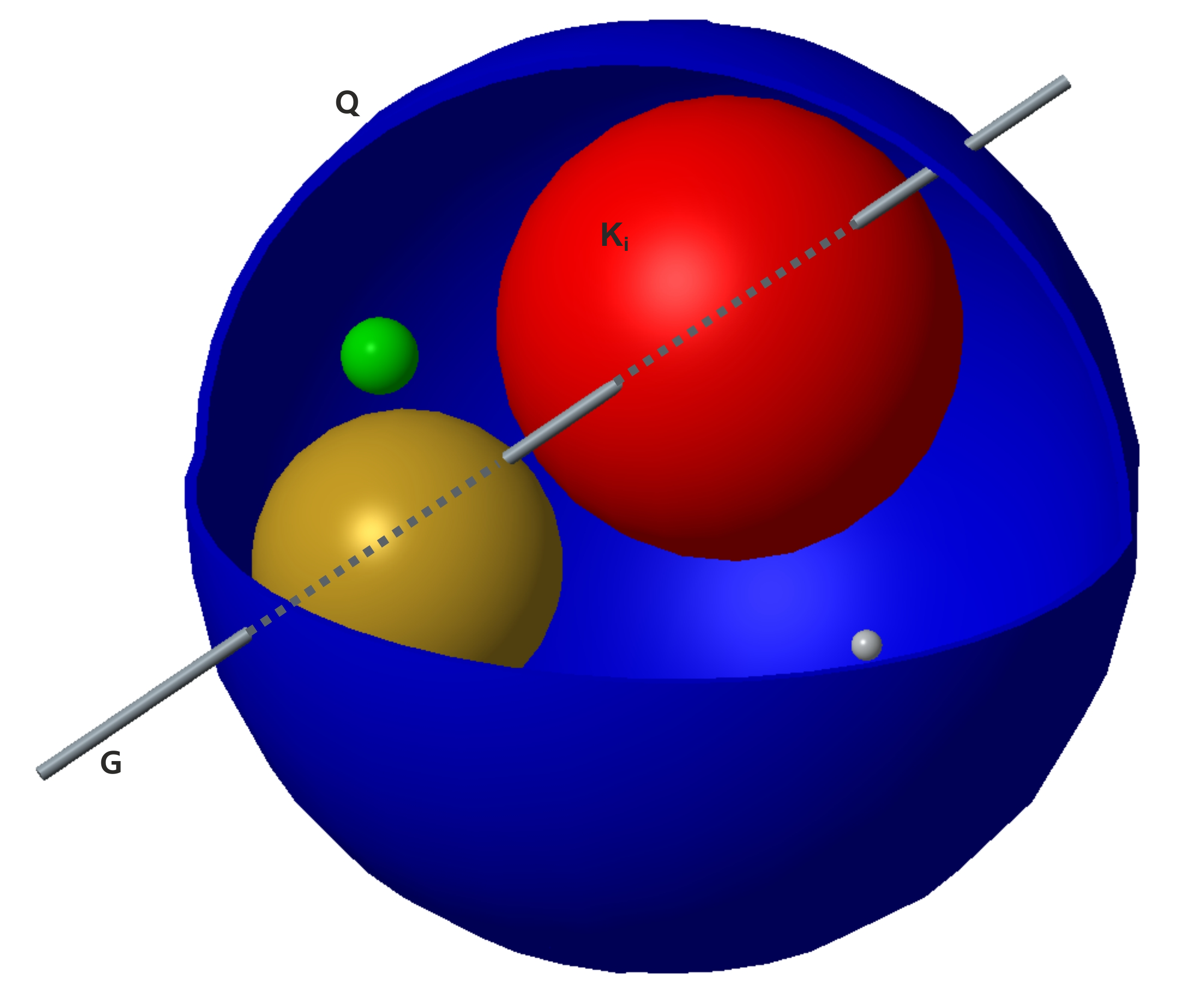}
       \caption{A section of a body $ \mathbf {Q} $ containing similar, spherical particles $ \mathbf {K_ {i}} $ by a random line $ \mathbf {G} $. The shape of the body $ \mathbf {Q} $ can generally be different from the shape of the particles $ \mathbf {K_ {i}} $.}
    \label{priamF}
\end{figure}

\begin{exa}[Spherical case]
We have $l_{m} = 2$ ($F = 4\pi$) and $\sigma(l)=\frac{l}{2}$,  which yields that equation \eqref{RNP} simplifies to

\begin{equation}\label{LCeq}
\int\limits\limits\limits\limits\limits\limits\limits\limits\limits\limits\limits\limits_{l / 2}^{\infty} H\left(\lambda\right) \mathrm{d}\lambda = \frac{2}{\beta\,l}\;h\left(l\right).
\end{equation}

Equation \eqref{LCeq} can be solved directly by differentiation with resulting form 

\begin{equation}\label{riesenie2}
H_{li}{\left( \lambda\right)}= - \frac{1}{\beta} \left( \frac{ h \left( 2\lambda \right)}{\lambda}\right)^{'}.
\end{equation}
\end{exa}

In further we set the notation
\begin{equation}\label{short}
\tilde{f}{\left(x\right)} = \left\{\begin{matrix}
f\left(x\right), & x\in \mathrm{supp}(f) 
\\ 0, & \mathrm{otherwise}.
\end{matrix}\right.
\end{equation}

{Exact solutions for similar convex bodies for plane sections}\label{rovina}

In this section we find exact solution for random plane case.
Consider here rewritten integral equation \eqref{RNR}
\begin{equation*}
\mathbf{K}\left[H\right]\coloneqq \alpha \int\limits_{\left(\frac{\sigma}{\sigma_{m}}\right)^\frac{1}{2}}^{\infty} \phi\left(\frac{\sigma}{\lambda^2}\right)\frac{H\left(\lambda\right)}{\lambda}\dd\lambda = h\left(\sigma\right), 
\end{equation*}

where $\sigma\in \left[0, \sigma_{m}\right],\sigma_{m} > 0, \lambda \geq\left(\frac{\sigma}{\sigma_{m}}\right)^\frac{1}{2}\geq 0$. We set in \eqref{RNR} $h(\sigma):= \sigma^{-s}, s \in \mathbb{C}$ yielding an equation
\begin{equation}\label{rnr2}
\mathbf{K}\left[H\right]  = \sigma^{-s}.
\end{equation}
We now seek a solution of equation \eqref{rnr2} in the form $H_{q}\left(\lambda\right)= k\left(q\right)\lambda^{q}$. We have 
\begin{equation}\label{vztah}
k\left(q\right) \alpha \int\limits_{\left(\frac{\sigma}{\sigma_{m}}\right)^\frac{1}{2}}^{\infty} \phi\left(\frac{\sigma}{\lambda^2}\right) \frac{\sigma^{s}}{\lambda^{1-q}} \dd\lambda = 1.
\end{equation}

This can be done by finding a relationship between $q$ and $s$. Transformation $\frac{\sigma}{\lambda^{2}}=z $ convert equation \eqref{vztah} to

\begin{equation}
\frac{k\left(q\right)}{2} \alpha \int\limits_{0}^{\sigma_{m}}\left. \phi\left( z\right) \frac{\sigma^{s-1}}{\lambda^{-q-2}}\right|_{\lambda^2=\frac{\sigma}{z}} \dd z = 1.
\end{equation}

Considering the form of Mellin transform one has to have ${s = -\frac{q}{2}}$.
We thus have $k\left(q\right)$ or $k(-2s)$ respectively. For simplicity we use
the notation $k(s)$ and similarly for function $H_{q}\left(\lambda\right)$.
Finally we have found $k$:
\begin{equation}
k\left(s\right) = \frac{2}{\alpha \int\limits_{0}^{\sigma_{m}} \phi\left( z\right) z^{s-1} \dd z}.
\end{equation}
Naturally \eqref{short} implies
$
\int\limits_{0}^{\sigma_{m}} \phi\left( z\right) z^{s-1} \dd z = \int\limits_{0}^{\infty} \tilde{\phi}\left( z\right) z^{s-1} \dd z,
$
therefore $k\left(s\right) = \frac{2}{\alpha\,\phi^*(s)}$
and 
\begin{equation}
H_{s}\left(\lambda\right) = \frac{2 \lambda^{-2s}}{\alpha\,\phi^*(s)}
\end{equation}
solves equation \eqref{rnr2}, whereas
$\phi^*(s):=\mathfrak{M}\left(\tilde{\phi}{\left(z\right)}\right).$

Now, we assume, which will be discussed later, that
\begin{equation}\nonumber
\exists \quad \mathfrak{M}\left(\tilde{h}\left(\sigma\right)\right) = h^{*}\left(s\right) \qquad \text{and} \qquad \exists \quad \mathfrak{M}^{-1}\left(h^{*}\left(s\right)\right) = \tilde{h}\left(\sigma\right).
\end{equation}
Since  $H_{s}\left(\lambda\right)$ solves equation \eqref{rnr2} we simply obtain
$\mathbf{K}\left[H_{s}\right]h^{*}\left(s\right) = \sigma^{-s} h^{*}\left(s\right),$ which gives
\begin{equation}
 \frac{1}{2\pi\textit{i}}\int\limits_{c - \textit{i}\infty}^{c + \textit{i}\infty}\mathbf{K}\left[H_{s}\right]h^{*}\left(s\right)\dd s = \underbrace{\frac{1}{2\pi\textit{i}}\int\limits_{c - \textit{i}\infty}^{c + \textit{i}\infty}\sigma^{-s} h^{*}\left(s\right) \dd s}_{\mathfrak{M}^{-1}\left(h^{*}\left(s\right)\right) = \tilde{h}\left(\sigma\right)} .
\end{equation}

Linearity and interchanging of order of integration yield

\begin{equation}
\alpha\int\limits_{\left(\frac{\sigma}{\sigma_{m}}\right)^\frac{1}{2}}^{\infty} \frac{\phi\left(\frac{\sigma}{\lambda^2}\right)}{\lambda} \left(\frac{1}{2\pi\textit{i}}\int\limits_{c - \textit{i}\infty}^{c + \textit{i}\infty}H_{s}\left(\lambda\right)h^{*}\left(s\right)\dd s \right)\dd\lambda = \tilde{h}\left(\sigma\right).
\end{equation}

Thus $\mathbf{K}\left[H\right]= \tilde{h}\left(\sigma\right)$ holds
for $H=\frac{1}{2\pi\textit{i}}\int\limits_{c - \textit{i}\infty}^{c + \textit{i}\infty}H_{s}\left(\lambda\right)h^{*}\left(s\right)\dd s$,
i.e. \begin{equation*}
\frac{{2}}{\alpha}\,\frac{1}{2\pi\textit{i}}\int\limits_{c - \textit{i}\infty}^{c + \textit{i}\infty}h^{*}\left(s\right)\frac{ \lambda^{-2s}}{\phi^{*}\left(s\right)}\dd s =  \frac{1}{\alpha}\,\mathfrak{M}^{-1}_{2c}\left(\frac{h^{*}\left(\frac{s}{2}\right)}{\phi^{*}\left(\frac{s}{2}\right)}\right)\left(\lambda\right),
\end{equation*}
 solves original equation \eqref{RNR}.

We denote this solution as
\begin{equation}\label{solRP}
\mathbb{H}_{pl}(\lambda) :=  \frac{1}{\alpha}\,\mathfrak{M}^{-1}_{2c}\left(\frac{h^{*}}{\phi^{*}}\left(\frac{s}{2}\right)\right)\left(\lambda\right).
\end{equation}

\section{Exact solutions for similar convex bodies for line sections}\label{priamka}

Now we focus on the random line case.
Consider here rewritten integral equation \eqref{RNP}
\begin{equation*}
\mathbf{L}\left[H\right]:=\beta\int\limits_{\frac{l}{l_{m}}}^{\infty} 
\phi\left(\frac{l}{\lambda}\right)\lambda H\left(\lambda\right)\dd\lambda
= h\left(l\right), 
\end{equation*}
where $l\in \left[0, l_{m}\right], l_{m} > 0, \lambda \geq\frac{l}{l_{m}}$. Again for $h\left(l\right):= l^{-s}, s \in \mathbb{C}$ in \eqref{RNP} we have
\begin{equation}\label{rnp2}
\mathbf{L}\left[H\right] = l^{-s},
\end{equation}
with the assumption that its solution has the form $H_{q}\left(\lambda\right)= k\left(q\right)\lambda^{q}$. Similarly we obtain

\begin{equation}
\int\limits_{\frac{l}{l_{m}}}^{\infty}\phi\left(\frac{l}{\lambda}\right)\frac{l^{s}}{\lambda^{-q-1}} \dd\lambda = 1
\end{equation}
and for $\frac{l}{\lambda}=z$ 
\begin{equation}
k\left(q\right)\int\limits_{0}^{l_{m}} \left.\phi\left( z\right) \frac{l^{s-1}}{\lambda^{-q-3}} \right|_{\lambda=\frac{l}{z}}\dd z = 1.\end{equation}
Once we set ${-2 - q= s}$, we find $k:$
\begin{equation}
k\left(s\right) =\left(\beta\ \int\limits_{0}^{l_{m}} \phi\left( z\right) z^{s-1} \dd z\right)^{-1}=\frac{1}{\beta\ \phi^*(s)},
\end{equation}
and the solution
\begin{equation}
H_{s}\left(\lambda\right)  = \frac{\lambda^{-s-2}}{\beta\ \phi^*(s)}
\end{equation}
of equation \eqref{rnp2}.  Similarly as before 
$ \mathbf{L}\left[H_{s}\right]h^{*}\left(s\right) = l^{-s} h^{*}\left(s\right)  $
is just
\begin{equation}
 \frac{1}{2\pi\textit{i}}\int\limits_{c - \textit{i}\infty}^{c + \textit{i}\infty}\mathbf{L}\left[H_{s}\right] h^{*}\left(s\right)\dd s = \underbrace{\frac{1}{2\pi\textit{i}}\int\limits_{c - \textit{i}\infty}^{c + \textit{i}\infty}h^{*}\left(s\right)l^{-s}  \dd s}_{\mathfrak{M}^{-1}\left(h^{*}\left(s\right)\right) = \tilde{h}\left(l\right)}
\end{equation}
and

\begin{equation}
\beta\ \int\limits_{\frac{l}{l_{m}}}^{\infty} \lambda\phi\left(\frac{l}{\lambda}\right) \left(\frac{1}{2\pi\textit{i}}\int\limits_{c - \textit{i}\infty}^{c + \textit{i}\infty}H_{s}\left(\lambda\right)h^{*}\left(s\right) \dd s\right)\dd\lambda= \tilde{h}\left(l\right).
\end{equation}

Finally 
\begin{equation}\label{solRL}
 \mathbb{H}_{li}\left(\lambda\right) := \frac{1}{\beta}\frac{\lambda^{-2}}{2\pi\textit{i}}\int\limits_{c - \textit{i}\infty}^{c + \textit{i}\infty}\frac{h^{*}\left(s\right)}{\phi^{*}\left(s\right)} \lambda^{-s}\dd s 
= \frac{1}{\beta\,\lambda^{2}}\, \mathfrak{M}^{-1}_c\left(\frac{h^{*}\left(s\right)}{\phi^{*}\left(s\right)}\right)\left(\lambda\right)
\end{equation}

solves original equation \eqref{RNP}.

\section{Existence of the solutions of equations
\eqref{RNR} and \eqref{RNP}}\label{exist}

Here we will consider the question of the existence of solutions of equations \eqref {RNR} and \eqref {RNP}. In the context of integral equations, this is a difficult task, mainly because they are singular (albeit linear) equations. Notice that there is also statistical interpretation of their derived form. They have the form of inverse Mellin transformation of the ratio of $\frac{s}{2}-1$ moments and $s-1$ moments respectively, of random variables corresponding to the densities of $ f $ and $ \phi $. Of course, the proportion of moments does not have to be a moment, so we cannot talk about a direct relationship to the new density. Indeed, for both equations, the non-negativity of the functions (inputs) $ h $ and $ \phi $ does not guarantee the non-negativity of the $ H $ solution. Also, it can not be guaranteed that $ H \in L (\mathbb {R} _ +) $. Thus, none of the basic density properties need not be met even if the inputs are densities. For general inputs $ h, \phi $, determining the necessary or sufficient conditions is a very difficult problem (non-negativity, but also integrability property of the solution). Indeed, just look at the results for the spherical ones listed in \ref {AIR}. Nevertheless, we give here a partial answer. In further we denote as 
$\gamma_F:=\inf\{\alpha^*: F=\mathcal{O}(x^{-\alpha^*}), ~x\to0^+\}$ and $\gamma:=\max\{\gamma_h,\gamma_\phi\}.$

\begin{prop}[Random lines]
\noindent  \phantom{c}
\begin{enumerate}[I)]
    \item Suppose that $h, ~\phi \in \mathcal{S}_{l_m}$ and  $h^*(\mu+\mathrm{i}\,\cdot), ~\frac{h^*}{\phi^*}(\mu+\mathrm{i}\,\cdot)\in L(\mathbb{R})$ for some $\mu>\gamma.$  Then there exists a unique solution of equation \eqref{RNP} and has the form \eqref{solRL}.
\item If furthermore $K>0$: ~$\left|\frac{h^{*}}{\phi^{*}}\left(s\right)\right|\leq K|s|^{-2}$, then $\mathbb{H}_{pl}\in C(\mathbb{R}_+)$ and if $\frac{h^*}{\phi^*}(\mu+\mathrm{i}\,\cdot)\in L^p(\mathbb{R}), ~p\in(1,2]$ for some $\mu>\gamma$, then $\lambda^2\,\mathbb{H}_{pl}(\lambda)\in L^{p/(p-1)}_{\{\mu\}}(\mathbb{R}_+).$
\end{enumerate}
\end{prop}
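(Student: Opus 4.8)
The plan is to diagonalise the operator $\mathbf{L}$ by the Mellin transform, turning the formal model-solution computation that precedes the statement into a rigorous argument under the stated hypotheses. First I would fix the fundamental strips. Since $h,\phi\in\mathcal{S}_{l_m}$ have support in $[0,l_m]$ and, by Lemma \ref{lem1}, satisfy $\phi=\mathcal{O}(z^{-\gamma_\phi})$, $h=\mathcal{O}(z^{-\gamma_h})$ as $z\to0^+$ with exponents $<1$, the transforms $\phi^*$ and $h^*$ converge and are holomorphic on the half-plane $\Re s>\gamma=\max\{\gamma_h,\gamma_\phi\}$; hence $\mu>\gamma$ places the vertical line $\Re s=\mu$ inside the common strip, where all subsequent transforms live. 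Replacing $\phi$ by $\tilde\phi$ lets the inner integral run over $(0,\infty)$, and applying $\mathfrak{M}$ to $\mathbf{L}[H]=h$ with the substitution $l/\lambda=z$ factors the kernel and yields, after Fubini, the multiplicative identity $\beta\,\phi^*(s)\,H^*(s+2)=h^*(s)$ on $\Re s=\mu$ (the rigorous counterpart of the model-solution relation used above).

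Solving gives $H^*(s+2)=h^*(s)/(\beta\,\phi^*(s))$; the assumption $\tfrac{h^*}{\phi^*}(\mu+\mathrm{i}\,\cdot)\in L(\mathbb{R})$ makes the inverse Mellin integral absolutely convergent, and carrying out the shift $w=s+2$ (which produces the prefactor $\lambda^{-2}$ and the contour $c=\mu$) reproduces exactly $\mathbb{H}_{li}$ of \eqref{solRL}. Substituting $\mathbb{H}_{li}$ back into $\mathbf{L}$ and interchanging integrals once more returns $\tilde h$; because $h^*(\mu+\mathrm{i}\,\cdot)\in L(\mathbb{R})$ the Mellin inversion theorem gives $\tilde h=h$, so $\mathbb{H}_{li}$ genuinely solves \eqref{RNP}. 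For uniqueness I would work in the natural class $L^1_{(a,b)}(\mathbb{R}_+)$ whose transforms are holomorphic in a strip about $\Re w=\mu+2$: two solutions obey $\beta\,\phi^*(s)\,(H_1-H_2)^*(s+2)=0$, and since $\phi^*$ is holomorphic with $\phi^*(\mu)=\int_0^{l_m}\phi(z)z^{\mu-1}\dd z>0$ its zeros are isolated, so $(H_1-H_2)^*\equiv0$ by analytic continuation and $H_1=H_2$ by injectivity of $\mathfrak{M}$.

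For part II (here the solution in question is the line solution $\mathbb{H}_{li}$ of \eqref{solRL}; the symbol $\mathbb{H}_{pl}$ in the statement should read $\mathbb{H}_{li}$, as the factor $\lambda^2$ makes clear), set $g(\lambda):=\lambda^2\,\mathbb{H}_{li}(\lambda)=\tfrac{1}{\beta}\,\mathfrak{M}^{-1}_c(\tfrac{h^*}{\phi^*})(\lambda)$, so that $\mathfrak{M}(g)=\tfrac{1}{\beta}\,h^*/\phi^*$ on $\Re s=\mu$. The bound $|\tfrac{h^*}{\phi^*}(s)|\le K|s|^{-2}$ dominates the integrand of $g$ by $\tfrac{K}{2\pi\beta}|s|^{-2}\lambda^{-\mu}$, which is integrable in $\tau=\Im s$ and locally bounded in $\lambda$; dominated convergence then gives uniform convergence on compacta and hence $g\in C(\mathbb{R}_+)$, whence $\mathbb{H}_{li}=\lambda^{-2}g\in C(\mathbb{R}_+)$. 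For the integrability claim I would pass to the Fourier picture via $\lambda=\mathrm{e}^{-t}$: under this change of variables $\|g\|_{L^{p/(p-1)}_{\{\mu\}}(\mathbb{R}_+)}=\|G\|_{L^{p/(p-1)}(\mathbb{R})}$ with $G(t)=\mathrm{e}^{-\mu t}g(\mathrm{e}^{-t})$, while $\mathfrak{M}(g)(\mu+\mathrm{i}\,\tau)=\widehat G(\tau)$; the hypothesis thus reads $\widehat G\in L^p(\mathbb{R})$ with $p\in(1,2]$, and the Hausdorff--Young inequality applied to the inverse Fourier transform gives $G\in L^{p/(p-1)}(\mathbb{R})$ (as the inverse transform of an $L^p$ function, $p\in(1,2]$). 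Translating back yields $\lambda^2\,\mathbb{H}_{li}(\lambda)\in L^{p/(p-1)}_{\{\mu\}}(\mathbb{R}_+)$.

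The main obstacle is not a single computation but the bookkeeping that keeps all three transforms on the same vertical line while the operator shifts the Mellin variable by $2$: one must check that $\Re s=\mu$ lies in the fundamental strips of both $h^*$ and $\phi^*$ (this is exactly where $\mu>\gamma$ and Lemma \ref{lem1} enter) and that the two Fubini interchanges are licensed by the bounded support of $\phi$ together with the $L(\mathbb{R})$ hypotheses on $h^*$ and $h^*/\phi^*$. The second delicate point is that uniqueness is only meaningful once the admissible function class is pinned down; I would therefore state it for $L^1_{(a,b)}(\mathbb{R}_+)$, where holomorphy in a strip lets the isolated-zeros argument for $\phi^*$ close the gap without having to assume $\phi^*$ nonvanishing on the whole line.
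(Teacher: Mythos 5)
Your proposal is correct, and for \emph{existence} it follows essentially the same route as the paper: both arguments rest on placing $\Re s=\mu>\gamma$ inside the common fundamental strip (Lemma \ref{lem1} plus Theorem \ref{ThH}), substituting the candidate \eqref{solRL} into $\mathbf{L}$, interchanging the integrals by Fubini, and concluding $\mathbf{L}[\mathbb{H}_{li}]=h$ a.e.\ from the inversion theorem \ref{VoINV}; your preliminary step of Mellin-transforming the equation to get $\beta\,\phi^{*}(s)H^{*}(s+2)=h^{*}(s)$ is only a derivation of the candidate, which the paper takes as already supplied by Section \ref{priamka}. Where you genuinely diverge is uniqueness and part II. The paper disposes of uniqueness in one line: $\mathbf{L}[\mathbb{H}^{1}-\mathbb{H}^{2}]=0$ ``implies uniqueness, since $\lambda\phi\left(\frac{l}{\lambda}\right)>0$ a.e.''\ --- an argument that, read literally, is insufficient, since a sign-changing difference can integrate to zero against a positive kernel for every $l$; injectivity of $\mathbf{L}$ is precisely what must be proved. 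Your route --- restrict to a class $L^1_{(a,b)}(\mathbb{R}_+)$ in which the forward Fubini step is licensed, pass to $\beta\,\phi^{*}(s)(H_1-H_2)^{*}(s+2)=0$, use holomorphy of $\phi^{*}$ together with $\phi^{*}(\mu)>0$ to force $(H_1-H_2)^{*}\equiv0$ by isolated zeros, and finish with injectivity of $\mathfrak{M}$ (Theorem \ref{VoINV}, part c)) --- actually closes this gap, at the price of making uniqueness a statement within that class rather than among all functions; you are right to flag that the admissible class must be pinned down for the claim to be meaningful. For part II the paper simply cites Theorems \ref{ohran} and \ref{LP}, whereas you re-prove them (dominated convergence for continuity; the substitution $\lambda=\mathrm{e}^{-t}$ plus Hausdorff--Young for the $L^{p/(p-1)}_{\{\mu\}}$ membership) --- the same mathematics, merely spelled out. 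You also correctly observe that $\mathbb{H}_{pl}$ in part II of the statement (and in the paper's own proof of part I) should read $\mathbb{H}_{li}$.
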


\begin{proof}
Since $h, ~\phi \in \mathcal{S}_{\sigma_m}$, from Lemma \ref{lem1} it follows that there exist $\gamma_h<1$
and $\gamma_\phi<1$ so, that $h\in L_{(\gamma_h,\infty)}(\mathbb{R}_+), ~\phi\in L_{(\gamma_\phi,\infty)}(\mathbb{R}_+)$. Thus from Theorem \ref{ThH} we have $h^*\in \mathcal{H}(\mathrm{St}(\gamma_h, \infty))$ and $\phi^*\in \mathcal{H}(\mathrm{St}(\gamma_\phi, \infty))$. This means, that for 
$\gamma,$ which is less than 1 therefore
 $h^*, ~\phi^*\in \mathcal{H}(\mathrm{St}(\gamma, \infty))$.
Again from Lemma \ref{lem1} we have that, $\phi^*$ is nonzero and thus the quotient  $\frac{h^*}{\phi^*}\in \mathcal{H}(\mathrm{St}(\gamma, \infty)),$ which means that the necessary condition for this function to be Mellin's image is fulfilled. Since $\frac{h^*}{\phi^*}(\mu+\mathrm{i}\,\cdot)\in L(\mathbb{R})$, from Theorem \ref{MTT} it follows that $\mathbb{H}_{pl}$ is well defined. Furthermore, it follows from Fubini's theorem and transformation theorem that
$$\mathbf{L}\left[\mathbb{H}_{pl}\right]=\int_\mathbb{R}\int_{\frac{l}{l_{m}}}^\infty\frac{\lambda^{-1-\mu-\mathrm{i}\,\nu}}{2\pi}\phi\left(\frac{l}{\lambda}\right)\frac{h^*}{\phi^*}(\mu+\mathrm{i}\,\nu)\dd\lambda\dd\nu=$$
$$=\frac{1}{2\pi}\int_\mathbb{R}l^{-\mu-\mathrm{i}\,\nu}\,\frac{h^*}{\phi^*}(\mu+\mathrm{i}\,\nu)\int_0^{l_m}
z^{\mu+\mathrm{i}\,\nu-1}\phi(z)\dd z\dd\nu =
\frac{l^{-\mu}}{2\pi}\int_\mathbb{R}h^*(\mu+\mathrm{i}\,\nu)\,l^{-\mathrm{i}\,\nu}\dd\nu.$$
Since $h^*(\mu+\mathrm{i}\,\cdot)\in L(\mathbb{R})$, from the inverse theorem \ref {VoINV} we get that $\mathbf{L}\left[\mathbb{H}_{pl}\right]=h$
a.e. This guarantees the existence since a.e. equal functions $ h_1 , h_2 $  have the same Mellin's image. Now, if $\mathbb{H}_{pl}^1$ and $\mathbb{H}_{pl}^2$ solves given equation, then equality $0=\mathbf{L}\left[\mathbb{H}_{pl}^1\right]-\mathbf{L}\left[\mathbb{H}_{pl}^2\right]=\mathbf{L}\left[\mathbb{H}_{pl}^1-\mathbb{H}_{pl}^2\right]=\beta\ \int\limits_{\frac{l}{l_{m}}}^{\infty} \lambda\phi\left(\frac{l}{\lambda}\right) (\mathfrak{H}_1(\lambda)-\mathfrak{H}_2(\lambda))\dd\lambda$ implies uniqueness, since $\lambda\phi\left(\frac{l}{\lambda}\right)>0$ a.e.
The second part of the proposition follows from Theorems \ref {ohran} and \ref {LP}.
\end{proof}

The proof of the following proposition is identical to the previous one. 

\begin{prop}[Random planes]
\noindent\phantom{c}
\begin{enumerate}[I)]
    \item Suppose that $h, ~\phi \in \mathcal{S}_{l_m}$ and  $h^*(\mu+\mathrm{i}\,\cdot), ~\frac{h^*}{\phi^*}(\mu+\mathrm{i}\,\cdot)\in L(\mathbb{R})$ for some $\mu>\gamma.$  Then there is a unique solution of equation \eqref {RNR}, having the form \eqref{solRP}
\item If moreover there exists $K>0$: ~$\left|\frac{h^{*}}{\phi^{*}}\left(\frac{s}{2}\right)\right|\leq K|s|^{-2}$, then $\mathbb{H}_{li}\in C(\mathbb{R}_+)$ and if $\frac{h^*}{\phi^*}(\mu+\mathrm{i}\,\cdot)\in L^p(\mathbb{R}), ~p\in(1,2]$ for some $\mu>\gamma$, then $\mathbb{H}_{li}\in L^{p/(p-1)}_{\{\mu\}}(\mathbb{R}_+).$
\end{enumerate}
\end{prop}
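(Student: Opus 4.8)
The plan is to run the argument of the preceding proof verbatim, with the operator $\mathbf{K}$, the variable $\sigma$ and the quadratic scaling $\sigma/\lambda^{2}$ replacing $\mathbf{L}$, $l$ and $l/\lambda$; the only genuinely new point is the factor $2$ produced by this quadratic substitution, which must cancel against the factor $2$ and the rescaling $s\mapsto s/2$ already built into \eqref{solRP}. First I would fix the analytic setting: since $h,\phi\in\mathcal{S}_{\sigma_m}$, Lemma \ref{lem1} supplies exponents $\gamma_h<1$, $\gamma_\phi<1$ with $h\in L_{(\gamma_h,\infty)}(\mathbb{R}_+)$ and $\phi\in L_{(\gamma_\phi,\infty)}(\mathbb{R}_+)$, so by Theorem \ref{ThH} the transforms $h^{*}$ and $\phi^{*}$ are holomorphic on $\mathrm{St}(\gamma_h,\infty)$ and $\mathrm{St}(\gamma_\phi,\infty)$. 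Putting $\gamma=\max\{\gamma_h,\gamma_\phi\}<1$ gives $h^{*},\phi^{*}\in\mathcal{H}(\mathrm{St}(\gamma,\infty))$, and since $\phi^{*}$ is nonzero (again Lemma \ref{lem1}) the quotient $h^{*}/\phi^{*}$ is holomorphic there; with the hypothesis $\frac{h^{*}}{\phi^{*}}(\mu+\mathrm{i}\,\cdot)\in L(\mathbb{R})$, Theorem \ref{MTT} makes the inverse transform \eqref{solRP}, hence $\mathbb{H}_{pl}$, well defined.

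Next I would check that $\mathbf{K}[\mathbb{H}_{pl}]=h$ a.e. Writing \eqref{solRP} on the line $s=\mu+\mathrm{i}\,\nu$ as $\mathbb{H}_{pl}(\lambda)=\frac{2}{\alpha}\frac{1}{2\pi}\int_{\mathbb{R}}\frac{h^{*}}{\phi^{*}}(\mu+\mathrm{i}\,\nu)\,\lambda^{-2\mu-2\mathrm{i}\,\nu}\dd\nu$, I would substitute into $\mathbf{K}$ and interchange the $\lambda$-integral with the $\nu$-integral by Fubini (justified by the $L(\mathbb{R})$ hypothesis). The inner integral $\int_{(\sigma/\sigma_m)^{1/2}}^{\infty}\phi(\sigma/\lambda^{2})\,\lambda^{-1-2\mu-2\mathrm{i}\,\nu}\dd\lambda$ is then evaluated by the change of variable $z=\sigma/\lambda^{2}$, under which $\dd\lambda=-\tfrac12\sigma^{1/2}z^{-3/2}\dd z$ and the limits $\lambda=(\sigma/\sigma_m)^{1/2},\infty$ become $z=\sigma_m,0$; it collapses to $\tfrac12\,\sigma^{-\mu-\mathrm{i}\,\nu}\int_{0}^{\sigma_m}z^{\mu+\mathrm{i}\,\nu-1}\phi(z)\dd z=\tfrac12\,\sigma^{-\mu-\mathrm{i}\,\nu}\phi^{*}(\mu+\mathrm{i}\,\nu)$. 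The $\phi^{*}$ cancels the denominator and the $\tfrac12$ absorbs the leading $2$, leaving $\frac{\sigma^{-\mu}}{2\pi}\int_{\mathbb{R}}h^{*}(\mu+\mathrm{i}\,\nu)\,\sigma^{-\mathrm{i}\,\nu}\dd\nu$, which by the inversion Theorem \ref{VoINV} equals $\tilde h(\sigma)=h(\sigma)$ a.e.

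Existence then follows because a.e.-equal functions have the same Mellin image. For uniqueness I would argue as before: if $\mathbb{H}^{1}_{pl},\mathbb{H}^{2}_{pl}$ both solve \eqref{RNR} then $\mathbf{K}[\mathbb{H}^{1}_{pl}-\mathbb{H}^{2}_{pl}]=\alpha\int_{(\sigma/\sigma_m)^{1/2}}^{\infty}\lambda^{-1}\phi(\sigma/\lambda^{2})\,(\mathbb{H}^{1}_{pl}-\mathbb{H}^{2}_{pl})\dd\lambda=0$ for a.e. $\sigma$, and the strict positivity a.e. of the kernel $\lambda^{-1}\phi(\sigma/\lambda^{2})$ forces the difference to vanish a.e. Finally, part II I would read off the regularity theorems: the bound $\left|\frac{h^{*}}{\phi^{*}}(s/2)\right|\le K|s|^{-2}$ feeds Theorem \ref{ohran} to give continuity of $\mathbb{H}_{pl}$ on $\mathbb{R}_+$, while the line hypothesis $\frac{h^{*}}{\phi^{*}}(\mu+\mathrm{i}\,\cdot)\in L^{p}(\mathbb{R})$, $p\in(1,2]$, feeds Theorem \ref{LP} to give the weighted $L^{p/(p-1)}_{\{\mu\}}(\mathbb{R}_+)$ membership, once the dilation $s\mapsto s/2$ in \eqref{solRP} is accounted for in the weight.

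I expect the only delicate step to be precisely this bookkeeping in the quadratic substitution $z=\sigma/\lambda^{2}$: one must verify that the rescaling $s\mapsto s/2$ and the compensating factor $2$ in \eqref{solRP} are exactly what is needed for the inner integral to reproduce $\phi^{*}$ evaluated on the same vertical line $\Re=\mu$ (not on a rescaled one), and that the lower limit $(\sigma/\sigma_m)^{1/2}$ maps to $z=\sigma_m$, so that the inner integration range is exactly $(0,\sigma_m)=\operatorname{supp}(\phi)$ and the cancellation with the denominator $\phi^{*}$ is clean. Everything else is the verbatim analogue of the random-line computation.
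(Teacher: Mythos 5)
Your proposal is correct and follows exactly the paper's intended argument: the paper proves the random-lines case and states that the random-planes proof is identical, which is precisely the verbatim transfer you carry out (Lemma \ref{lem1} and Theorem \ref{ThH} for the strip of holomorphy, Theorem \ref{MTT} for well-definedness, Fubini plus the substitution $z=\sigma/\lambda^{2}$ and Theorem \ref{VoINV} for existence, kernel positivity for uniqueness, and Theorems \ref{ohran} and \ref{LP} for part II). Your explicit bookkeeping of the factor $2$ and the rescaling $s\mapsto s/2$ in \eqref{solRP} is exactly the detail the paper leaves implicit.
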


Notice that for $p$ we have sufficient conditions for the solutions $\mathbb{H}_{pl}$ and $\lambda^2\,\mathbb{H}_{pl}$ to be in $L^2$. It is good to realize that we can say more. Indeed, independence of Mellin's inversion on $ \mu $ (for which $ h^* / \phi^* $ can be integrated) must necessarily follow from the uniqueness of Mellin transform. Thus, if it fulfills the condition of uniform convergence \eqref{NP}, then $\lambda^2\,\mathbb{H}_{li}(\lambda)$ and $\mathbb{H}_{pl}(\lambda)$ are in $L_{(\alpha,\beta)}(\mathbb{R}_+)$ for some interval $(\alpha,\beta).$

\section{Random plane examples}\label{RPex}

 Recall that we have two formulas, that give us solution of the integral equation \eqref{RNR}, in spherical case, the classical one \eqref{riesenie} and our formula \eqref{solRP}. In this section we show that the former one can not be used when violating regularity of the right hand site. 

\begin{exa}[Nearly spherical case]
It is a natural generalization of spherical case (i.e. when $p=1/2$) with the density kernel
\begin{equation}\label{NSC}
\phi{\left(\sigma\right)} = \left\{\begin{matrix}
\frac{1-p}{\sigma_m^{1-p}(\sigma_m-\sigma)^p}, & \sigma \in \left[0, \sigma_m\right] 
\\ 0, & \sigma > \sigma_m
\end{matrix}\right..
\end{equation}
where $\sigma_m\geq F/4$ and $p=\frac{M \sigma_m}{2\pi V}\leq 1/2$. This leads to generalized Abel integral equation and similar type of solution of the form
\begin{equation}\label{SEX3}
    H(\lambda)=\zeta(\lambda;\sigma_m,p,M)\,\int_{\pi\lambda^{2}}^{\infty} \frac{h '{\left(\sigma\right)}}{\left(\sigma - \sigma_m\lambda^{2}\right)^{p}} \mathrm{d}\sigma,
\end{equation}
see e.g. \cite{Santalo}. Again regularity condition on $h$ is needed. See Figure \ref{EX3} for case of $p=1/4, K=2, \sigma_m=\pi$.

\begin{figure}[h!]
     \centering
     \includegraphics[width=0.5\textwidth]{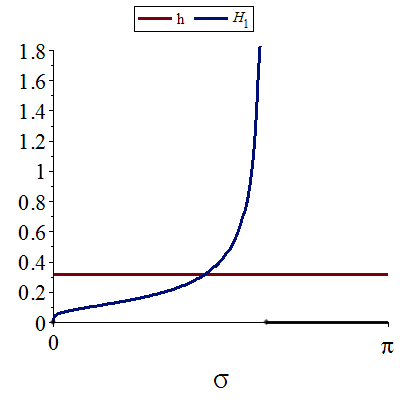}
     \caption{Graph of the uniform density $h$ and a solution $H_1$ given by \eqref{SEX3}.}
     \label{EX3}
 \end{figure}

\end{exa}
 
 The following example shows that the original formula derived formally (Abel integral equation) is not always applicable. Recall that Beta function is defined as
${\displaystyle \mathrm {B} (x,y)=\int _{0}^{1}t^{x-1}(1-t)^{y-1}\,dt}, ~\Re(x), \Re(y)>0$
and ${\displaystyle \mathrm {B} (x,y)={\frac {\Gamma (x)\,\Gamma (y)}{\Gamma (x+y)}}},$ ~${\displaystyle \Gamma (z)={\frac {\Gamma (z+1)}{z}}}, ~\Re(z)>0$.

\begin{exa}[Spherical case, uniform distribution]

Consider the equation \eqref{RNR} and uniform density $h\left(\sigma\right)$ in the form

\begin{equation}\nonumber
h{\left(\sigma\right)} = \left\{\begin{matrix}
\frac{1}{\pi}, & \sigma \in \left[0, \pi\right] 
\\ 0, & \sigma > \pi
\end{matrix}\right..
\end{equation}

\begin{figure}[h!]
     \centering
     \includegraphics[width=0.5\textwidth]{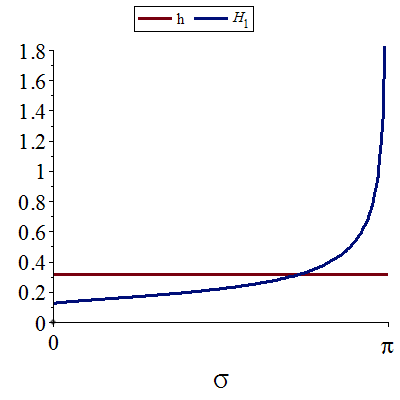}
     \caption{Graph of the uniform density $h$ and a solution $H_1$ given by \eqref{SEX1}.}
     \label{EX1}
 \end{figure}

First we find $h^{*}{\left(s\right)}$ and $\phi^{*}{\left(s\right)}$:
\begin{multline*}
     h^{*}{\left(s\right)} = \int\limits_{0}^{\infty} h\left(\sigma\right) \sigma^{s-1} \dd \sigma =  \frac{1}{\pi} \int\limits_{0}^{\pi} \sigma^{s-1} \dd \sigma  = \frac{\pi^{s-1}}{s}, \quad {~\Re(s)> 0},
\end{multline*}
\begin{equation*}
    \phi^{*}{\left(s\right)} = \int\limits_{0}^{\infty} \phi\left(\sigma\right) \sigma^{s-1} \dd \sigma = \int_0^\pi \frac{\sigma^{s-1}}{2\sqrt{\pi(\pi-\sigma)}}\dd \sigma =\frac{{\pi}^{s-1}}{2}\int_{0}^{1}\!{\frac {{t}^{s-1}}{\sqrt {1-t}}}\,{\rm d}t=
    \end{equation*}
\begin{equation*}
    =\frac{{\pi}^{s-1}}{2} B(s,1/2)=
    \frac{{\pi}^{s-1} \Gamma(s) \Gamma(1/2)}{2\Gamma(s+1/2)}=
    \frac{{\pi}^{s-1/2}\,\Gamma(s) }{2\Gamma(s+1/2)}, \quad  {~\Re(s)> 0}.
\end{equation*}

We thus have
\begin{equation*}
   { \frac{h^{*}{\left(s/2\right)}}{\phi^{*}{\left(s/2\right)}} = {\frac {4 \Gamma  \left( 1/2+s/2 \right) }{\sqrt {\pi }\,s\,\Gamma  \left(s/2 \right) }}, \quad ~\Re(s)> 0.}
\end{equation*}
   { with inverse Mellin transform, see \cite[II.5.35]{oberhettinger}) has the solution}
\begin{equation}\label{SEX1}
    \mathbb{H}_{pl}{\left( \lambda\right)}=\frac{1}{\alpha}{ \  \mathfrak{M}^{-1}_c\left(\frac{h^{*}}{\phi^{*}}\left(\frac{s}{2}\right)\right)\left(\lambda\right) {=} \frac{3{\lambda}}{4 \sqrt{1 - \lambda}}, ~\lambda\in[0,1)}
\end{equation}
see Figure \ref{EX1}. We have to emphasize that uniform case yields zero solution $ H_{pl}{\left( \lambda\right)}= 0$ a.e., which obviously does not fulfill equation \eqref{RNR}. Notice moreover that $\mathbb{H}_{pl}\in L^1(\mathbb{R}^+) $ and $\alpha$
is for us here normalizing constant.
\end{exa}

\begin{exa}[Nearly spherical case, uniform distribution]
For uniform density
\begin{equation}\nonumber
h{\left(\sigma\right)} = \left\{\begin{matrix}
\frac{1}{K}, & \sigma \in \left[0, K\right] 
\\ 0, & \sigma > K
\end{matrix}\right., K>0
\end{equation}
and density given by \eqref{NSC} one can show that the solution has normalized form (i.e. it is a density)
$$\mathbb{H}_{sp}(\lambda)={\frac {{{\sigma}_{{m}}}^{\frac{3}{2}-p}\sqrt {\pi }}{\Gamma 
 \left( p \right) \Gamma  \left( 3/2-p \right) \sqrt {K}} \left( {\frac {K}{{
\lambda}^{2}}}-{\sigma}_{{m}} \right) ^{p-1}},  ~~\lambda<\sqrt{K/\sigma_m}.$$
\end{exa}
 
\section{Random line examples}\label{RLex}

In this section we give several examples of solutions of the integral equation \eqref{RNP}. Recall that we have two formulas in spherical case, i.e. classical \eqref{riesenie2} and our formula \eqref{solRL}. In this case they coincide.

\begin{exa}[Spherical case, triangle distribution]
This example shows that the support of resulting density need not be full original set. Consider equation \eqref{RNP} the density $h\left(l\right)$ in the form
\begin{equation}\nonumber
h{\left(l\right)} = \left\{\begin{matrix}
1 - \left| l - 1 \right|, & l \in \left[0, 2\right] 
\\ 0, & l > 2
\end{matrix}\right..
\end{equation}

\begin{figure}[h!]
     \centering
     \includegraphics[width=0.5\textwidth]{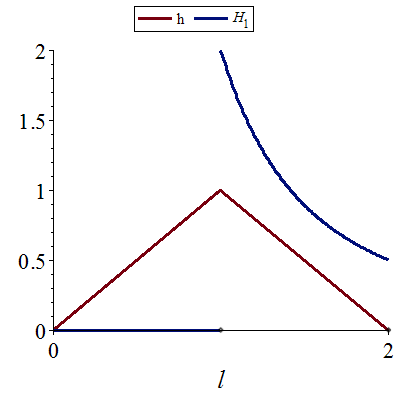}
     \caption{Graph of the uniform density $h$ and a solution $H_1$ given by \eqref{SEX2}.}
     \label{EX2}
 \end{figure}

One can directly obtain $h^{*}{\left(s\right)}$ and $\phi^{*}{\left(s\right)}$:
\begin{multline*}
     h^{*}{\left(s\right)} = \int\limits_{0}^{\infty} h\left(l\right) l^{s-1} \dd l =  \int\limits_{0}^{1} l \,l^{s-1} \dd l + \int\limits_{1}^{2}  \left(2- l\right) l^{s-1} \dd l = \frac{2^{1+s}-2}{s\left(1+s\right)}, ~\Re(s)> 0.
\end{multline*}
\begin{equation*}
    \phi^{*}{\left(s\right)} = \int\limits_{0}^{\infty}\phi\left(l\right) l^{s-1} \dd l = \frac{1}{2}\int\limits_{0}^{2} l^{s} \dd l =
     \frac{2^{s}}{s+1}, \quad ~\Re(s)> -1.
\end{equation*}
Therefore
\begin{equation*}
    \frac{h^{*}{\left(s\right)}}{\phi^{*}{\left(s\right)}} = \frac{2 - 2^{1 - s}}{s},  ~\Re(s)> 0
\end{equation*}
and inverse Mellin transform (see \cite[II. 2.2.1.]{oberhettinger}) has the form
\begin{equation}\nonumber
   \mathfrak{M}^{-1}_c\left(\frac{h^{*}\left(s\right)}{\phi^{*}\left(s\right)}\right)\left(\lambda\right) =  \mathfrak{M}^{-1}_c\left(\frac{2}{s} - \frac{2^{1 - s}}{s}\right)\left(\lambda\right) {=}\left\{\begin{matrix}
2, & \lambda \in \left[\frac{1}{2}, 1\right] 
\\ 0, & otherwise
\end{matrix}\right..
\end{equation}
Finally
\begin{equation}\label{SEX2}
    \mathbb{H}_{li}{\left( \lambda\right)} = \left\{\begin{matrix}
\frac{2}{\beta\,\lambda^{2}}, & \lambda \in \left[\frac{1}{2}, 1\right] 
\\ 0, & otherwise
\end{matrix}\right.,
\end{equation}
see Figure \ref{EX2}. Again $\beta$ here is normalizing constant. This is in a coincidence with $H_{li}{\left( \lambda\right)}$ given by the classical formula.  
 \end{exa}

\begin{exa}[Spherical case, quadratic distribution]
Again consider equation \eqref{RNP} and right hand site $h\left(l\right)$ in the form

\begin{equation}\nonumber
h{\left(l\right)} = \left\{\begin{matrix}
\frac{3}{8}\left(2 - l\right)^{2}, & l \in \left[0, 2\right] 
\\ 0, & l > 2
\end{matrix}\right..
\end{equation}

We have
\begin{multline*}
     h^{*}{\left(s\right)} = \int\limits_{0}^{\infty} h\left(l\right) l^{s-1} \dd l =  \int\limits_{0}^{2} \frac{3}{8} \left(2 - l\right)^{2} l^{s-1} \dd l  
     = \frac{3\,2^{s}}{s\left(s^{2}+ 3s + 2\right)}, \quad ~\Re(s)>0.
\end{multline*}
 and again
 \begin{equation*}
    \phi^{*}{\left(s\right)}  = \frac{2^{s}}{s+1}, \quad ~\Re(s)> -1.
\end{equation*}
Thus
\begin{equation*}
    \frac{h^{*}{\left(s\right)}}{\phi^{*}{\left(s\right)}} = \frac{3}{s^{2} + 2s}, \quad ~\Re(s)> 0.
\end{equation*}
with inverse Mellin transform (see \cite[II.2.9.]{oberhettinger})  of the form
\begin{equation}\nonumber
   \mathfrak{M}^{-1}_c\left(\frac{h^{*}\left(s\right)}{\phi^{*}\left(s\right)}\right)\left(\lambda\right) =  \mathfrak{M}^{-1}_c\left(\frac{3}{s^{2} + 2s}\right) \left(\lambda\right)  {=} \frac{3}{2}\left(1-\lambda^{2}\right), ~\lambda\in[0,1] 
\end{equation}
This yields
\begin{equation*}
    \mathbb{H}_{li}{\left( \lambda\right)} = \left\{\begin{matrix}
\frac{3}{2\beta\,\lambda^{2}}\left(1-\lambda^{2}\right), & \lambda \in \left(0, 1\right]
\\ 0, & otherwise
\end{matrix}\right..
\end{equation*}

Again $H_{li}{\left( \lambda\right)} =\mathbb{H}_{li}{\left( \lambda\right)}$ a.e. but notice that this solution is not integrable (thus it can not be normalized in standard sense).
\end{exa}

\section*{Acknowledgements}
This work was supported by the Slovak Research and Development
Agency under the contracts No. APVV-16-0337 and APVV-17-0568.

\bibliographystyle{imsart-nameyear}
\bibliography{references}

\appendix

\section{The Method of Model Solutions}\label{MMR}

Consider a linear equation in the operator form
\begin{equation}\label{mmr1}
\mathbf{O} \left[y\right] = f\left(x\right), 
\end{equation}
where $\mathbf{O}$ linear (integral) operator\footnote{ $\mathbf{O}$ has to be independent of $\lambda.$}, $y\left(x\right)$ is an unknown function, and $f\left(x\right)$ is a known function. Let us have a (non constant) test solution
\begin{equation}\label{mmr2}
y_{0} = y_{0}\left(x, \lambda\right), 
\end{equation}
depending on an auxiliary parameter $\lambda$. The right-hand side that corresponds to the test solution \eqref{mmr2} is
\begin{equation}\label{mmr3}
f_{0}\left(x, \lambda\right) = \mathbf{O}  \left[y_{0}\right].
\end{equation}
  Let us multiply equation \eqref{mmr3} by some function $\varphi \left(\lambda\right)$ and integrate it with respect to $\lambda$ over an suitable interval $\left[a , b\right]$. Assumption of the interchange of the order of integration yields
\begin{equation}\label{mmr4}
\mathbf{O} \left[y_{\varphi}\right] = f_{\varphi}\left(x\right),
\end{equation}
with
\begin{equation}\label{mmr5}
y_{\varphi}\left(x\right) = \int\limits_{a}^{b} y_{0}\left(x, \lambda\right)\varphi \left(\lambda\right) \dd \lambda, \qquad f_{\varphi}\left(x\right) = \int\limits_{a}^{b} f_{0}\left(x, \lambda\right)\varphi \left(\lambda\right) \dd \lambda.
\end{equation}

Then from $\eqref{mmr4}$ and $\eqref{mmr5}$
follows, for the right-hand side  $f = f_{\varphi}\left(x\right)$, that the function $y = y_{\varphi}\left(x\right)$ is a solution of the original equation $\eqref{mmr1}$. Here the main
problem is how to choose a function $\varphi \left(\lambda\right)$ to obtain a given function $f_{\varphi}\left(x\right)$. This can be overcome by finding a test solution $Y\left(x, \lambda\right)$ (called a {\it model solution}) such that the right-hand side of $\eqref{mmr1}$ is the kernel of a known inverse integral transform.

\section{Mellin transform}\label{MT}

The following section discusses Mellin integral transform and its properties. In the text we rely mainly on works  \cite{DAMT} and \cite{MTA}. In our problem of solving \eqref{RNR} and \eqref{RNP}, using Mellin transform seems paradoxically easier that using Laplace or Fourier. 

Denote as $\mathrm{St}(\alpha, \beta)$ an open strip of complex number $s = \mu + \mathrm{i}\,\nu, ~\mu,\nu\in\mathbb{R}$ such, that $\alpha < \mu < \beta$ (to be more precise $\mathrm{St}(\alpha, \beta)=(\alpha, \beta)\times\mathrm{i}\,\mathbb{R}\subset\mathbb{C}$, thus a vertical strip parallel with the  imaginary axis intersecting the real axis in $\alpha$ a $\beta$). For {$f\in L_{lok}\left(\mathbb{R}_+\right)$} the Mellin transform is defined as
\begin{equation}\label{MT}
   \mathfrak{M}(f\left(x\right))(s) = f^{*}\left(s\right) = \int\limits_{0}^{\infty} f\left(x\right) x^{s-1} \dd x.
\end{equation}
Largest  open strip $\mathrm{St}(\alpha, \beta)$ of its convergence is called {\it fundamental strip}. Since $|x^{\mathrm{i}\nu}|=1$, we have $|f^*(\mu+\mathrm{i}\nu)|\leq ||f||_{L_{\{\mu\}}(\mathbb{R}_+)}, ~\forall \nu\in\mathbb{R}$.
{To emphasize for fixed $\mu=\Re(s)$ notation $\mathfrak{M}_\mu$ of transform \eqref{MT} is used. It is well known that (absolute) convergence on $\alpha<\mathrm{Re}(s)<\beta$ is determined by asymptotical behaviour near 0 and $\infty$:
$$\alpha=\inf\{\alpha^*: f=\mathcal{O}(x^{-\alpha^*}), ~x\to0^+\},$$
$$\beta=\sup\{\beta^*: f=\mathcal{O}(x^{-\beta^*}), ~x\to\infty\}.$$

A lot of examples can be found in monograph \cite{oberhettinger} consisting of, a.o.,  the Mellin and inverse Mellin transformation tables. We will now present basic properties we need. The set of holomorphic functions on  open $O\subseteq \mathbb{C}^n$ is denoted as $\mathcal{H}(O).$

\begin{thm}[\cite{DAMT}, Pr. 1]\label{skal}
\begin{enumerate}[a)]
    \item Mellin transform is bounded linear operator on $ L_{(\alpha,\beta)}(\mathbb{R}_+)$, so that $|f^*(\mu+\mathrm{i}\,\cdot)|\leq ||f||_{L_{\{\mu\}}(\mathbb{R}_+)}$.
\item If $c>0$ and $f\in L_{(\alpha,\beta)}(\mathbb{R}_+)$, then
$f(cx)\in L_{(\alpha,\beta)}(\mathbb{R}_+)$ a $f(x^c)\in L_{(c\alpha,c\beta)}(\mathbb{R}_+)$, whereas 
$$\mathcal{M}(f(cx))(s)=c^{-s}\mathcal{M}(f(x))(s), ~s\in\mathrm{St}(\alpha,\beta),$$
$$\mathcal{M}(f(x^c))(s)=c^{-1}\mathcal{M}(f(x/c))(s), ~s\in\mathrm{St}(c\alpha,c\beta).$$
\end{enumerate}
\end{thm}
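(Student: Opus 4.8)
The plan is to treat the two parts separately, each reducing to an elementary manipulation of the defining integral \eqref{MT} combined with the asymptotic characterisation of the fundamental strip recorded just above the statement (namely $\alpha=\inf\{\alpha^*:f=\mathcal{O}(x^{-\alpha^*}),~x\to0^+\}$ and the analogous formula for $\beta$).

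For part a), the starting observation is that $|x^{s-1}|=x^{\mu-1}$ for $s=\mu+\mathrm{i}\,\nu$, which follows from $|x^{\mathrm{i}\,\nu}|=1$ when $x>0$. Applying the triangle inequality for integrals to \eqref{MT} then gives, for every $\nu\in\mathbb{R}$,
$$|f^{*}(\mu+\mathrm{i}\,\nu)|\leq\int_{0}^{\infty}|f(x)|\,x^{\mu-1}\dd x=\|f\|_{L_{\{\mu\}}(\mathbb{R}_+)},$$
since the weight defining $L_{\{\mu\}}(\mathbb{R}_+)$ is $v(x)=x^{\mu-1}$ in the case $p=1$. This single estimate does double duty: it shows that the integral converges absolutely for each $\mu\in(\alpha,\beta)$, so that $f^{*}$ is well defined on the whole strip, and it shows that $f\mapsto f^{*}(\mu+\mathrm{i}\,\cdot)$ is bounded of norm at most one. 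Linearity is immediate from linearity of the integral.

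For part b), I would handle the dilation $f(cx)$ by the substitution $u=cx$ and the power map $f(x^{c})$ by the substitution $u=x^{c}$. In the first case $\dd x=c^{-1}\dd u$ and $x^{s-1}=(u/c)^{s-1}$, so the factor $c^{-s}$ pulls out and one reads off $\mathcal{M}(f(cx))(s)=c^{-s}\mathcal{M}(f(x))(s)$. In the second case $\dd x=c^{-1}u^{1/c-1}\dd u$ and the exponents combine to $u^{s/c-1}$, producing the constant $c^{-1}$ together with the rescaling of the transform variable to $s/c$, which is the content of the second displayed identity. The fundamental-strip claims then follow by pushing the big-$\mathcal{O}$ bounds through the substitutions: $f=\mathcal{O}(x^{-\alpha^{*}})$ as $x\to0^{+}$ forces $f(cx)=\mathcal{O}(x^{-\alpha^{*}})$ but $f(x^{c})=\mathcal{O}(x^{-c\alpha^{*}})$, and symmetrically at infinity, so the strip is preserved under dilation and dilated to $(c\alpha,c\beta)$ under the power map.

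There is no deep obstacle here, as the result is essentially computational. The only point demanding genuine care is the bookkeeping of the fundamental strips in part b): these must be derived from the asymptotic definitions of $\alpha$ and $\beta$ rather than merely asserted, and one must confirm that the two substitutions preserve absolute convergence so that the changes of variable are legitimate on the entire strip and not just pointwise.
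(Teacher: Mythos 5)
Your argument is correct, and there is nothing to compare it against inside the paper itself: Theorem~\ref{skal} is stated there as a quoted result from \cite{DAMT} (Pr.~1) with no proof given. What you write is the standard computation that the reference carries out — the identity $|x^{\mathrm{i}\,\nu}|=1$ plus the triangle inequality for part a), and the substitutions $u=cx$ and $u=x^{c}$, with the strips tracked through the change of variables, for part b). The bound $|f^{*}(\mu+\mathrm{i}\,\nu)|\le \|f\|_{L_{\{\mu\}}(\mathbb{R}_+)}$, linearity, the preservation of the strip under dilation, and the dilation of the strip to $(c\alpha,c\beta)$ under $x\mapsto x^{c}$ are all handled correctly and completely.

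One discrepancy should be flagged, however. Your substitution $u=x^{c}$ yields $\mathcal{M}(f(x^{c}))(s)=c^{-1}\mathcal{M}(f(x))(s/c)$, a rescaling of the \emph{transform} variable, and this is indeed the correct scaling law. But the identity as printed in the statement reads $\mathcal{M}(f(x^{c}))(s)=c^{-1}\mathcal{M}(f(x/c))(s)$, which is literally a different assertion: by the dilation rule of part b), $\mathcal{M}(f(x/c))(s)=c^{s}\mathcal{M}(f(x))(s)$, so the printed identity would claim $\mathcal{M}(f(x^{c}))(s)=c^{s-1}\mathcal{M}(f(x))(s)$. That is false in general; for $f(x)=e^{-x}$ and $c=2$ the left-hand side at $s=1$ equals $\tfrac{1}{2}\Gamma(1/2)=\sqrt{\pi}/2$, while the right-hand side equals $\Gamma(1)=1$. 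In other words, the statement as printed contains a typo (the factor $\mathcal{M}(f(x/c))(s)$ should read $\mathcal{M}(f(x))(s/c)$); your computation is the correct version and silently corrects it. Rather than asserting that your formula ``is the content of the second displayed identity,'' you should say explicitly that you are proving the corrected identity, since as written the two do not agree.
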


\begin{thm}[\cite{DAMT}, Th. 1.]\label{ThH}
If $f\in L_{(\alpha,\beta)}(\mathbb{R}_+),$ then $f^*\in \mathcal{H}(\mathrm{St}(\alpha,\beta))$.
\end{thm}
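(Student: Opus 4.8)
The plan is to establish first that $f^*(s)$ is a well-defined, absolutely convergent integral at every point of the strip, and then to deduce holomorphy from Morera's theorem, reducing everything to a single interchange of the order of integration that is legitimized by a domination bound uniform on compact vertical substrips. First I would verify well-definedness: for $s=\mu+\mathrm{i}\,\nu$ with $\alpha<\mu<\beta$ one has $|f(x)\,x^{s-1}|=|f(x)|\,x^{\mu-1}$ because $|x^{\mathrm{i}\,\nu}|=1$, hence
\[
\int_0^\infty |f(x)\,x^{s-1}|\dd x=\int_0^\infty |f(x)|\,x^{\mu-1}\dd x=\|f\|_{L_{\{\mu\}}(\mathbb{R}_+)}<\infty
\]
by the hypothesis $f\in L_{(\alpha,\beta)}(\mathbb{R}_+)=\bigcap_{\mu\in(\alpha,\beta)}L_{\{\mu\}}(\mathbb{R}_+)$, so $f^*$ is defined pointwise on $\mathrm{St}(\alpha,\beta)$.

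The decisive step is a domination uniform on each closed substrip. Fixing $\alpha<\mu_1<\mu_2<\beta$, for every $s$ with $\mu_1\le\Re(s)\le\mu_2$ I would split the integral at $x=1$ and use that $x^{\mu-1}\le x^{\mu_1-1}$ on $(0,1]$ (base $\le1$, exponent $\ge\mu_1-1$) and $x^{\mu-1}\le x^{\mu_2-1}$ on $[1,\infty)$ (base $\ge1$, exponent $\le\mu_2-1$), giving
\[
|f(x)\,x^{s-1}|\le |f(x)|\bigl(x^{\mu_1-1}\,\mathbf{1}_{(0,1]}(x)+x^{\mu_2-1}\,\mathbf{1}_{[1,\infty)}(x)\bigr)=:g(x),
\]
where $\int_0^\infty g\le\|f\|_{L_{\{\mu_1\}}(\mathbb{R}_+)}+\|f\|_{L_{\{\mu_2\}}(\mathbb{R}_+)}<\infty$. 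This single dominating function $g$, independent of $s$ throughout the substrip, yields continuity of $f^*$ on $\mathrm{St}(\alpha,\beta)$ at once by dominated convergence.

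Finally I would invoke Morera's theorem. For any closed triangle (or solid rectangle) $\Delta$ with $\Delta\subset\mathrm{St}(\alpha,\beta)$, pick $\mu_1,\mu_2$ with $\alpha<\mu_1<\mu_2<\beta$ enclosing all real parts attained on $\Delta$; the bound $|f(x)\,x^{s-1}|\le g(x)$ makes $\oint_{\partial\Delta}\int_0^\infty|f(x)\,x^{s-1}|\dd x\,|\dd s|\le\mathrm{length}(\partial\Delta)\cdot\int_0^\infty g(x)\dd x<\infty$, so Fubini's theorem permits the swap
\[
\oint_{\partial\Delta}f^*(s)\dd s=\int_0^\infty f(x)\left(\oint_{\partial\Delta}x^{s-1}\dd s\right)\dd x.
\]
For each fixed $x>0$ the map $s\mapsto x^{s-1}=e^{(s-1)\log x}$ is entire, so Cauchy's theorem makes the inner contour integral vanish, whence $\oint_{\partial\Delta}f^*(s)\dd s=0$. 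Since $f^*$ is continuous and its integral around every such contour is zero, Morera's theorem delivers $f^*\in\mathcal{H}(\mathrm{St}(\alpha,\beta))$.

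I expect the uniform domination on the substrip, and the resulting Fubini justification, to be the main obstacle; everything else (entireness of $x^{s-1}$, Cauchy, Morera) is routine. An equivalent route differentiates under the integral sign, showing $(f^*)'(s)=\int_0^\infty f(x)\,x^{s-1}\log x\dd x$; there the same $g$ multiplied by a $|\log x|$ factor (controlled by $x^{\pm\delta}$ for small $\delta$) dominates the difference quotients, but the Morera argument has the advantage of never confronting the logarithmic weight directly.
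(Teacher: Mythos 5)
Your proof is correct and complete. Note, however, that the paper itself offers no proof of this statement to compare against: Theorem~\ref{ThH} is imported verbatim from the literature (cited as \cite{DAMT}, Th.~1) and used as a black box in the existence propositions of Section~\ref{exist}, so the only meaningful comparison is with the standard argument in that reference, which yours matches in substance. Each step checks out: the pointwise bound $|f(x)x^{s-1}|=|f(x)|x^{\mu-1}$ gives well-definedness on the strip directly from $f\in\bigcap_{\mu\in(\alpha,\beta)}L_{\{\mu\}}(\mathbb{R}_+)$; the splitting at $x=1$ with $x^{\mu-1}\le x^{\mu_1-1}$ on $(0,1]$ and $x^{\mu-1}\le x^{\mu_2-1}$ on $[1,\infty)$ is exactly the right uniform dominant on a closed substrip, integrable because $f\in L_{\{\mu_1\}}\cap L_{\{\mu_2\}}$; continuity then follows by dominated convergence, and compactness of a triangle $\Delta\subset\mathrm{St}(\alpha,\beta)$ guarantees you can indeed choose $\alpha<\mu_1\le\Re(s)\le\mu_2<\beta$ on $\Delta$, so the Fubini swap over $\partial\Delta\times(0,\infty)$ is licit and Cauchy's theorem applied to the entire function $s\mapsto x^{s-1}$ kills the inner integral, after which Morera applies since holomorphy is a local property and the strip is open. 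Your closing remark is also accurate: the alternative of differentiating under the integral sign works equally well, with the factor $|\log x|$ absorbed by $x^{\pm\delta}$ at the cost of shrinking $\mu_1,\mu_2$ inward by $\delta$, and this is in fact the route some texts take to get the bonus formula $(f^*)'(s)=\int_0^\infty f(x)x^{s-1}\log x\dd x$; the Morera route you chose trades that explicit derivative for a cleaner domination, which is a reasonable exchange.
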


\begin{thm}[\cite{yakubovich}, Th. 1.15.]
If $f\in L^p_{\{\mu\}}(\mathbb{R}_+), ~p\in(1,2]$ for some  ~$\mu\in\mathbb{R}$, then
$\mathfrak{M}_\mu(f)$ exists and belongs into $L^{p/(p-1)}(\mathbb{R})$.
\end{thm}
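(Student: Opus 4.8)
The plan is to reduce this weighted Hausdorff--Young statement for the Mellin transform to the classical Hausdorff--Young inequality for the Fourier transform on $\mathbb{R}$, using the exponential change of variables $x=e^{-t}$, which is the standard device converting the multiplicative structure on $\mathbb{R}_+$ into the additive structure on $\mathbb{R}$ and turning \eqref{MT} into a Fourier integral.

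First I would set $g(t):=f(e^{-t})\,e^{-\mu t}$ and compute that, for $s=\mu+\mathrm{i}\,\nu$, the substitution $x=e^{-t}$ recasts the defining integral \eqref{MT} as
$$\mathfrak{M}_\mu(f)(\mu+\mathrm{i}\,\nu)=\int_{-\infty}^{\infty} f(e^{-t})\,e^{-\mu t}\,e^{-\mathrm{i}\,\nu t}\dd t=\widehat{g}(\nu),$$
where $\widehat{g}$ is the Fourier transform of $g$. Applying the same substitution to the $L^p$-norm gives the isometry
$$\|g\|_{L^p(\mathbb{R})}^p=\int_{-\infty}^{\infty}|f(e^{-t})|^p\,e^{-p\mu t}\dd t=\int_0^\infty |f(x)|^p\,x^{p\mu-1}\dd x=\|f\|_{L^p_{\{\mu\}}(\mathbb{R}_+)}^p,$$
so that $f\in L^p_{\{\mu\}}(\mathbb{R}_+)$ if and only if $g\in L^p(\mathbb{R})$, with equal norms. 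This is exactly the isometry $\|f\|_{L^p_{\{\mu\}}(\mathbb{R}_+)}=\|v^{1/p}f\|_{L^p}$ recorded in Section~\ref{setup}, read through the change of variables.

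With the identification $\mathfrak{M}_\mu(f)(\mu+\mathrm{i}\,\cdot)=\widehat{g}$ in hand, the conclusion follows from the classical Hausdorff--Young inequality: for $p\in(1,2]$ and conjugate exponent $q=p/(p-1)$ one has $\widehat{g}\in L^q(\mathbb{R})$ with $\|\widehat{g}\|_{L^q(\mathbb{R})}\le C_p\,\|g\|_{L^p(\mathbb{R})}$. Combined with the isometry above this yields both that $\mathfrak{M}_\mu(f)(\mu+\mathrm{i}\,\cdot)\in L^{p/(p-1)}(\mathbb{R})$ and the quantitative estimate $\|\mathfrak{M}_\mu(f)\|_{L^{q}(\mathbb{R})}\le C_p\,\|f\|_{L^p_{\{\mu\}}(\mathbb{R}_+)}$, which is the assertion.

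The step that needs care, and which I expect to be the main obstacle, is the precise meaning of ``$\mathfrak{M}_\mu(f)$ exists'' when $1<p<2$: for such $p$ the integral \eqref{MT} need not converge absolutely, since $L^p_{\{\mu\}}(\mathbb{R}_+)\not\subseteq L^1_{\{\mu\}}(\mathbb{R}_+)$. I would handle this exactly as in the Fourier setting. On the dense subspace $L^1_{\{\mu\}}(\mathbb{R}_+)\cap L^p_{\{\mu\}}(\mathbb{R}_+)$ the transform is given by the absolutely convergent integral, and under the isometry this corresponds to $L^1(\mathbb{R})\cap L^p(\mathbb{R})$, on which $\widehat{g}$ is defined classically. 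The Hausdorff--Young bound shows $\mathfrak{M}_\mu$ is bounded from this dense subspace into $L^q(\mathbb{R})$, hence extends uniquely to a bounded operator on all of $L^p_{\{\mu\}}(\mathbb{R}_+)$; it is this extension -- the $L^q$-limit of the truncated integrals, coinciding with the Plancherel transform when $p=2$ -- that is meant by $\mathfrak{M}_\mu(f)$. The remaining verifications (density of the intersection, convergence of the approximating integrals, and consistency of the extension across exponents) are routine and mirror the proof of the Fourier Hausdorff--Young theorem, so no idea beyond the change of variables and interpolation is required.
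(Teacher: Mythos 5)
The paper does not actually prove this statement---it is imported verbatim as Theorem~\ref{LP}'s companion from \cite{yakubovich} (Th.~1.15)---so there is no internal proof to compare against, and your argument is exactly the standard proof of the cited result: the substitution $x=e^{-t}$ turns $\mathfrak{M}_\mu(f)(\mu+\mathrm{i}\,\nu)$ into the Fourier transform $\widehat{g}(\nu)$ of $g(t)=f(e^{-t})e^{-\mu t}$, the computation $\|g\|_{L^p(\mathbb{R})}=\|f\|_{L^p_{\{\mu\}}(\mathbb{R}_+)}$ is the isometry already recorded in Section~\ref{setup}, and Hausdorff--Young then gives membership in $L^{p/(p-1)}(\mathbb{R})$ with the quantitative bound. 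Your handling of ``exists'' for $1<p<2$ via bounded extension from the dense subspace $L^1_{\{\mu\}}\cap L^p_{\{\mu\}}$ (equivalently, the $L^{q}$-limit of truncated integrals) is also the correct reading and matches how the transform is defined on $L^p_{\{\mu\}}(\mathbb{R}_+)$ in the cited source, so the proposal is complete and correct.
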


E.g. $f\in L^2_{\{\mu\}}(\mathbb{R}_+)$ yields $\mathfrak{M}_\mu(f)\in L^{2}(\mathbb{R})$. 
On the other hand, when we talk about transformation, we also need to talk about its inversion and we naturally ask if $\mathfrak{M}^{-1}(\mathfrak{M}(f))=f$ ? Here we have to emphasize that the Mellin transform does not map functions from $ L_{(\alpha,\beta)}(\mathbb{R}_+)$
into $\mathcal{H}(\mathrm{St}(\alpha,\beta))$ surjectively. Indeed, there is no image, for example, for a holomorphic function $f\equiv 1$. Necessary condition for $F\in \mathcal{H}(\mathrm{St}(\alpha,\beta))$ to be the Mellin transform of a function from $L_{(\alpha,\beta)}(\mathbb{R}_+)$, is 
\begin{equation}\label{NP}
    \lim_{|\nu|\to\infty}F(\mu+\mathrm{i}\nu)=0
\end{equation}
uniformly with respect to $\mu\in[a,b] ~\forall [a,b]\subset (\alpha,\beta)$. From the further it follows that the condition  \eqref{NP} it is also sufficient if $F(\mu+\mathrm{i}\,\cdot)\in L(\mathbb{R}) ~\forall \mu\in (\alpha,\beta)$. For $F(\mu+\mathrm{i}\,\cdot)\in L(\mathbb{R})$ is the inverse Mellin transform defined as
\begin{equation}\label{MINV}
    \mathfrak{M}^{-1}_\mu(F(s))(x):=\frac{x^{-\mu}}{2\pi}\int_\mathbb{R}F(\mu+\mathrm{i}\,\nu)\,x^{-\mathrm{i}\,\nu}\dd\nu, ~x>0
\end{equation}

It is well defined as the next theorem says.

\begin{thm}[\cite{DAMT}, Pr. 5., Lm. 4.]\label{MTT}
\begin{enumerate}[a)]
    \item  If $F(\mu+\mathrm{i}\,\cdot)\in L(\mathbb{R})$ for some $\mu\in\mathbb{R}$, then
    $$    |\mathfrak{M}^{-1}_\mu(F(s))(x)|\leq \frac{x^{-\mu}}{2\pi}\int_\mathbb{R}|F(\mu+\mathrm{i}\nu)|\dd \nu<\infty, ~x>0$$
    thus, the inverse Mellin transform \eqref{MINV} is well defined.
    \item If $F(\mu+\mathrm{i}\,\cdot)\in L(\mathbb{R}), \forall \mu\in(\alpha, \beta), ~F\in\mathcal{H}(\mathrm{St}(\alpha,\beta))$ and holds \eqref{NP}, then $\mathfrak{M}^{-1}_\mu(F)$ is independent on the choice of $\mu\in(\alpha,\beta)$ and $\mathfrak{M}^{-1}_\mu(F)\in L_{(\alpha,\beta)}(\mathbb{R}_+)$.
\end{enumerate}
\end{thm}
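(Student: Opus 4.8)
The plan is to treat the two parts separately, obtaining part b) as a consequence of part a) together with a contour-shift argument.

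For part a) I would argue directly from the definition \eqref{MINV}. Since $|x^{-\mathrm{i}\,\nu}|=1$ for every $x>0$ and $\nu\in\mathbb{R}$, the triangle inequality for integrals gives at once
\begin{equation*}
|\mathfrak{M}^{-1}_\mu(F(s))(x)|\leq \frac{x^{-\mu}}{2\pi}\int_\mathbb{R}|F(\mu+\mathrm{i}\,\nu)|\,|x^{-\mathrm{i}\,\nu}|\dd\nu=\frac{x^{-\mu}}{2\pi}\int_\mathbb{R}|F(\mu+\mathrm{i}\,\nu)|\dd\nu,
\end{equation*}
and the right-hand side is finite precisely because $F(\mu+\mathrm{i}\,\cdot)\in L(\mathbb{R})$. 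This simultaneously yields the claimed bound and the absolute convergence of the defining integral, so the inverse transform is well defined.

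For part b) I would first establish $\mu$-independence by a rectangle contour argument. Fix $x>0$ and $\mu_1,\mu_2\in(\alpha,\beta)$ with $\mu_1<\mu_2$, and integrate the holomorphic integrand $F(s)\,x^{-s}$ over the positively oriented boundary of the rectangle $[\mu_1,\mu_2]\times[-T,T]$. As $F\in\mathcal{H}(\mathrm{St}(\alpha,\beta))$ and the rectangle lies in the strip, Cauchy's theorem makes this contour integral vanish for every $T>0$. Letting $T\to\infty$, the two vertical sides produce $2\pi\mathrm{i}$ times the difference $\mathfrak{M}^{-1}_{\mu_2}(F)(x)-\mathfrak{M}^{-1}_{\mu_1}(F)(x)$, the passage to the limit being justified by $F(\mu_j+\mathrm{i}\,\cdot)\in L(\mathbb{R})$. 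Each horizontal side is bounded in modulus by $\int_{\mu_1}^{\mu_2}|F(\sigma\pm\mathrm{i}\,T)|\,x^{-\sigma}\dd\sigma$; here the uniform limit \eqref{NP} on the compact interval $[\mu_1,\mu_2]$ drives the integrand to zero uniformly in $\sigma$ as $T\to\infty$, so both horizontal contributions vanish. Hence $\mathfrak{M}^{-1}_{\mu_1}(F)=\mathfrak{M}^{-1}_{\mu_2}(F)$, i.e. the inverse transform is a single function $g$, independent of $\mu\in(\alpha,\beta)$. The membership $g\in L_{(\alpha,\beta)}(\mathbb{R}_+)$ then follows cheaply from part a): since $g=\mathfrak{M}^{-1}_\mu(F)$ for every $\mu\in(\alpha,\beta)$ and $F(\mu+\mathrm{i}\,\cdot)\in L(\mathbb{R})$ throughout the strip, part a) supplies the pointwise bound $|g(x)|\leq C_\mu\,x^{-\mu}$ with $C_\mu=\frac{1}{2\pi}\|F(\mu+\mathrm{i}\,\cdot)\|_{L(\mathbb{R})}$, valid for all $\mu\in(\alpha,\beta)$. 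To verify $g\in L_{\{\mu'\}}(\mathbb{R}_+)$ for a fixed $\mu'\in(\alpha,\beta)$, I would split $\int_0^\infty x^{\mu'-1}|g(x)|\dd x$ at $x=1$: on $(0,1]$ I apply the bound with some $\mu_1\in(\alpha,\mu')$, reducing the tail to $\int_0^1 x^{\mu'-\mu_1-1}\dd x<\infty$; on $[1,\infty)$ I apply it with some $\mu_2\in(\mu',\beta)$, reducing it to $\int_1^\infty x^{\mu'-\mu_2-1}\dd x<\infty$. Summing gives finiteness, and since $\mu'$ was arbitrary this is exactly $g\in L_{(\alpha,\beta)}(\mathbb{R}_+)$.

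The main obstacle I anticipate is the vanishing of the horizontal sides of the contour: this is the one step where the \emph{uniform} character of the decay \eqref{NP}, rather than mere pointwise vanishing of $F(\mu+\mathrm{i}\,\nu)$ as $|\nu|\to\infty$ at each fixed $\mu$, is indispensable, since one must control $\sup_{\sigma\in[\mu_1,\mu_2]}|F(\sigma\pm\mathrm{i}\,T)|$ simultaneously across the compact $\mu$-interval. Everything else reduces either to the triangle inequality or to elementary power-function integrals.
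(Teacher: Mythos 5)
Your proposal is correct, and there is an important contextual point: the paper does not prove Theorem~\ref{MTT} at all --- it is imported with the citation \cite{DAMT} (Pr.~5., Lm.~4.), so the only meaningful comparison is with the classical argument in that source, and yours is precisely that classical argument. Part a) is the triangle inequality exploiting $|x^{-\mathrm{i}\,\nu}|=1$, which is the standard (and essentially the only) proof. Part b) via the positively oriented rectangle $[\mu_1,\mu_2]\times[-T,T]$ is the standard contour-shift proof: Cauchy's theorem applies since $F\in\mathcal{H}(\mathrm{St}(\alpha,\beta))$, the vertical sides converge to $2\pi\mathrm{i}\bigl(\mathfrak{M}^{-1}_{\mu_2}(F)(x)-\mathfrak{M}^{-1}_{\mu_1}(F)(x)\bigr)$ thanks to $F(\mu_j+\mathrm{i}\,\cdot)\in L(\mathbb{R})$ (your identification of \eqref{MINV} with the line integral \eqref{Minv} under the parametrization $s=\mu+\mathrm{i}\,\nu$ is exact), and you correctly isolate the one step where the \emph{uniformity} in \eqref{NP} is indispensable: the horizontal sides are dominated by $\sup_{\sigma\in[\mu_1,\mu_2]}|F(\sigma\pm\mathrm{i}\,T)|\int_{\mu_1}^{\mu_2}x^{-\sigma}\dd\sigma$, and pointwise decay at each fixed $\mu$ would not suffice. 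The split-at-$x=1$ estimate for the membership $g\in L_{(\alpha,\beta)}(\mathbb{R}_+)$ is also right, with the exponents chosen on the correct sides ($\mu_1<\mu'$ near $0$, $\mu_2>\mu'$ near $\infty$).

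One small gloss you should add for completeness: to integrate $x^{\mu'-1}|g(x)|$ over $\mathbb{R}_+$ you implicitly need $g$ to be measurable. This is harmless --- $g$ is in fact continuous on $\mathbb{R}_+$, by dominated convergence applied to the defining integral (on any compact $[x_0,x_1]\subset\mathbb{R}_+$ the integrand $F(\mu+\mathrm{i}\,\nu)\,x^{-\mu-\mathrm{i}\,\nu}$ is continuous in $x$ and dominated by $|F(\mu+\mathrm{i}\,\nu)|\max\{x_0^{-\mu},x_1^{-\mu}\}\in L(\mathbb{R})$) --- but a sentence to this effect closes the argument. With that addition the proof is complete and self-contained, which is arguably more than the paper itself offers for this statement.
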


Is is also fact, that if the function $F$ is holomorphic and vanishes sufficiently fast for $\Im(s)\to\pm\infty$, then the Cauchy integral theorem yields that the inverse\footnote{It is good to realize that from a practical point of view the inverse Mellin transformation formula \eqref{Minv} is not easy to use and usually leads to relatively complicated calculations (using, for example, the theory of complex analysis).} \eqref{MINV} is in fact a (complex) path integral (along a line parallel to the $y$-axis) given by
\begin{equation}\label{Minv}
    \mathfrak{M}^{-1}_\mu(F(s))(x) =\frac{1}{2 \pi \mathrm{i}} \int_{\mu-i \infty}^{\mu+i \infty} x^{-s} F(s)\dd s:=\lim_{R_1,R_2\to\infty}\frac{1}{2 \pi \mathrm{i}} \int_{\mu-i R_1}^{\mu+i R_2} x^{-s} F(s)\dd s.
\end{equation} }

If we add boundedness to holomorphy property, we can say something about the continuity of images of the transform.

\begin{thm}[\cite{poularikas}, Th. 12.1.]\label{ohran}
If $F\in \mathcal{H}(\mathrm{St}(\alpha,\beta))$ and if does exist $K>0$ such that $|F(s)|\leq K|s|^{-2}$, then $\mathfrak{M}^{-1}(F)\in C(\mathbb{R}_+).$
\end{thm}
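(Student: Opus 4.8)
The plan is to argue directly from the integral representation \eqref{MINV} of the inverse Mellin transform and to derive continuity from the dominated convergence theorem. First I would fix any $\mu\in(\alpha,\beta)$ with $\mu\neq 0$ (possible since $\alpha<\beta$) and observe that the decay hypothesis forces absolute integrability along the corresponding vertical line: writing $s=\mu+\mathrm{i}\,\nu$ one has $|s|^{2}=\mu^{2}+\nu^{2}$, so
$$|F(\mu+\mathrm{i}\,\nu)|\leq \frac{K}{\mu^{2}+\nu^{2}},$$
and since $\int_{\mathbb{R}}(\mu^{2}+\nu^{2})^{-1}\dd\nu=\pi/|\mu|<\infty$, we obtain $F(\mu+\mathrm{i}\,\cdot)\in L(\mathbb{R})$. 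Moreover the same majorant gives $|F(\mu+\mathrm{i}\,\nu)|\leq K\nu^{-2}\to 0$ as $|\nu|\to\infty$, uniformly in $\mu$ on compact subintervals, so condition \eqref{NP} holds; by Theorem \ref{MTT} the inverse transform is well defined and independent of the admissible $\mu$, which legitimizes the unsubscripted notation $\mathfrak{M}^{-1}(F)$.

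The key observation is that the modulus of the integrand in \eqref{MINV} is independent of $x$, because $|x^{-\mathrm{i}\,\nu}|=|\mathrm{e}^{-\mathrm{i}\,\nu\ln x}|=1$ for every $x>0$. Hence the single integrable function $\nu\mapsto K(\mu^{2}+\nu^{2})^{-1}$ dominates the integrand simultaneously for all $x$. To establish continuity at an arbitrary $x_{0}>0$, I would take any sequence $x_{n}\to x_{0}$ in $\mathbb{R}_{+}$ and note that, by continuity of the logarithm and the exponential, $x_{n}^{-\mathrm{i}\,\nu}=\mathrm{e}^{-\mathrm{i}\,\nu\ln x_{n}}\to \mathrm{e}^{-\mathrm{i}\,\nu\ln x_{0}}=x_{0}^{-\mathrm{i}\,\nu}$ for each fixed $\nu$. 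The dominated convergence theorem then yields
$$\int_{\mathbb{R}}F(\mu+\mathrm{i}\,\nu)\,x_{n}^{-\mathrm{i}\,\nu}\dd\nu\longrightarrow \int_{\mathbb{R}}F(\mu+\mathrm{i}\,\nu)\,x_{0}^{-\mathrm{i}\,\nu}\dd\nu,$$
and, multiplying by the prefactor $x\mapsto x^{-\mu}$ which is continuous and positive on $\mathbb{R}_{+}$, we conclude $\mathfrak{M}^{-1}_{\mu}(F)(x_{n})\to\mathfrak{M}^{-1}_{\mu}(F)(x_{0})$. Since $x_{0}$ was arbitrary, $\mathfrak{M}^{-1}(F)\in C(\mathbb{R}_{+})$.

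I expect the whole argument to be routine once the majorant is secured, so the only delicate point is the use of the quadratic decay $|F(s)|\leq K|s|^{-2}$: it is precisely this rate that produces a dominating function integrable over the whole line and, crucially, independent of $x$, whereas a mere $|s|^{-1}$ decay would already fail to be integrable at infinity and would invalidate the dominated convergence step. Holomorphy itself plays only a supporting role, ensuring via Theorem \ref{MTT} that $\mathfrak{M}^{-1}(F)$ is a single well-defined function across the strip; the continuity is carried entirely by the uniform majorant together with the pointwise convergence of the oscillatory factor $x^{-\mathrm{i}\,\nu}$.
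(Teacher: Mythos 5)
Your proof is correct, but note there is nothing in the paper to compare it against: Theorem \ref{ohran} is quoted from the handbook reference \cite{poularikas} and the paper supplies no proof of it, using it only as an ingredient in part II) of the propositions of Section \ref{exist}. Your argument is the standard one and it works: the quadratic decay gives the $x$-independent integrable majorant $K(\mu^{2}+\nu^{2})^{-1}$ on the line $\Re(s)=\mu\neq 0$, and dominated convergence together with the pointwise convergence of the oscillatory factor $x\mapsto x^{-\mathrm{i}\,\nu}$ yields continuity of $\mathfrak{M}^{-1}_{\mu}(F)$ at every $x_{0}>0$; the prefactor $x^{-\mu}/(2\pi)$ is harmless.

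Two small remarks. First, the restriction $\mu\neq 0$ creates a gap, albeit a trivially fixable one, when you invoke Theorem \ref{MTT}(b) to justify the unsubscripted notation $\mathfrak{M}^{-1}(F)$: that theorem requires $F(\mu+\mathrm{i}\,\cdot)\in L(\mathbb{R})$ for \emph{every} $\mu\in(\alpha,\beta)$, and if $\alpha<0<\beta$ you have not verified this on the line $\mu=0$, where your majorant $K/\nu^{2}$ is not integrable near $\nu=0$. The fix is immediate and makes the restriction $\mu\neq0$ unnecessary altogether: $F$ is holomorphic, hence continuous, hence bounded on the compact segment $\{\mu+\mathrm{i}\,\nu:\ |\nu|\le 1\}$ of any vertical line in the strip, while the bound $K\nu^{-2}$ controls the tails; so $F(\mu+\mathrm{i}\,\cdot)\in L(\mathbb{R})$ for every $\mu\in(\alpha,\beta)$, and Theorem \ref{MTT}(b) applies as you intend. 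Second, your closing diagnosis is accurate: holomorphy enters only through well-definedness and $\mu$-independence (and through the local boundedness just mentioned), while continuity itself is carried entirely by the majorant; and with only $|F(s)|\le K|s|^{-1}$ this particular dominated-convergence argument indeed collapses, which is consistent with the theorem being stated with the exponent $-2$.
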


We may also be interested when the image of the transform will be in some of the spaces $L^{q}_{\{\mu\}}(\mathbb{R})$.

\begin{thm}[\cite{yakubovich}, Th. 1.16.]\label{LP}
If $F(\mu+\mathrm{i}\,\cdot)\in L^p(\mathbb{R}), ~p\in(1,2]$ for some $\mu\in\mathbb{R}$, then $\mathfrak{M}^{-1}_\mu(F)$ exists and belongs into $L^{p/(p-1)}_{\{\mu\}}(\mathbb{R}_+)$.
\end{thm}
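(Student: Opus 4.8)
The plan is to transport the statement to the Fourier setting by the exponential substitution $x=e^{-t}$, under which the Mellin transform on $\mathbb{R}_+$ becomes the Fourier transform on $\mathbb{R}$ and the weighted spaces $L^p_{\{\mu\}}(\mathbb{R}_+)$ become the unweighted $L^p(\mathbb{R})$; the assertion then reduces to the classical Hausdorff--Young inequality.

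First I would record the isometry. Define $(T_\mu f)(t):=f(e^{-t})\,e^{-\mu t}$ for $f$ on $\mathbb{R}_+$. A change of variables $x=e^{-t}$ gives $\|T_\mu f\|_{L^p(\mathbb{R})}=\|f\|_{L^p_{\{\mu\}}(\mathbb{R}_+)}$ (consistent with the isometry $\|f\|_{L^p_{\{\mu\}}(\mathbb{R}_+)}=\|v^{1/p}f\|_{L^p(\mathbb{R})}$ recorded in Section \ref{setup}), so $T_\mu$ is an isometric isomorphism onto $L^p(\mathbb{R})$ for \emph{every} $p$. A direct computation then yields $\mathfrak{M}(f)(\mu+\mathrm{i}\,\nu)=\widehat{T_\mu f}(\nu)$, where $\widehat{g}(\nu)=\int_\mathbb{R}g(t)\,e^{-\mathrm{i}\,\nu t}\dd t$; dually, writing $\Phi(\nu):=F(\mu+\mathrm{i}\,\nu)$ and $\check\Phi(t):=\tfrac{1}{2\pi}\int_\mathbb{R}\Phi(\nu)\,e^{\mathrm{i}\,\nu t}\dd\nu$ for the inverse Fourier transform normalized as in \eqref{MINV}, one checks directly from \eqref{MINV} that $\mathfrak{M}^{-1}_\mu(F)(x)=x^{-\mu}\,\check\Phi(-\ln x)$, i.e. $T_\mu\bigl(\mathfrak{M}^{-1}_\mu(F)\bigr)=\check\Phi$.

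With these identities in hand the argument is short. Given $\Phi=F(\mu+\mathrm{i}\,\cdot)\in L^p(\mathbb{R})$ with $p\in(1,2]$ and $p'=p/(p-1)$, the Hausdorff--Young inequality asserts that $\check\Phi$ exists as an element of $L^{p'}(\mathbb{R})$ with $\|\check\Phi\|_{L^{p'}(\mathbb{R})}\leq C_p\|\Phi\|_{L^p(\mathbb{R})}$. Applying the isometry of the previous paragraph at exponent $p'$,
\begin{equation*}
\|\mathfrak{M}^{-1}_\mu(F)\|_{L^{p'}_{\{\mu\}}(\mathbb{R}_+)}=\|T_\mu\bigl(\mathfrak{M}^{-1}_\mu(F)\bigr)\|_{L^{p'}(\mathbb{R})}=\|\check\Phi\|_{L^{p'}(\mathbb{R})}\leq C_p\,\|F(\mu+\mathrm{i}\,\cdot)\|_{L^p(\mathbb{R})}<\infty,
\end{equation*}
which is exactly the claim.

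The delicate point --- and the part I would write out most carefully --- is the meaning of ``$\mathfrak{M}^{-1}_\mu(F)$ exists''. For $p=1$ Theorem \ref{MTT}(a) already gives absolute pointwise convergence of \eqref{MINV}, but for $1<p<2$ the datum $\Phi\in L^p(\mathbb{R})$ need not be integrable, so the integral in \eqref{MINV} need not converge absolutely; as in the Fourier theory, $\check\Phi$ (equivalently $\mathfrak{M}^{-1}_\mu(F)$ via $T_\mu$) must be interpreted as the $L^{p'}$-limit of the truncated integrals over $\{|\nu|\le R\}$ as $R\to\infty$. Since $T_\mu$ is an isometry at every exponent, this is literally the assertion that the inverse Fourier transform maps $L^p(\mathbb{R})$ into $L^{p'}(\mathbb{R})$, which I would obtain by Riesz--Thorin interpolation between the two endpoints $\|\check\Phi\|_{L^\infty(\mathbb{R})}\le\tfrac{1}{2\pi}\|\Phi\|_{L^1(\mathbb{R})}$ (trivial) and $\|\check\Phi\|_{L^2(\mathbb{R})}=\tfrac{1}{\sqrt{2\pi}}\|\Phi\|_{L^2(\mathbb{R})}$ (Plancherel). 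The only genuine content beyond bookkeeping is this single interpolation/limit step; transporting the resulting bound back through $T_\mu$ as in the displayed inequality then finishes the proof.
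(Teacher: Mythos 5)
The paper never proves this theorem --- it is imported verbatim with the citation \cite{yakubovich} (Th.~1.16) --- so there is no internal proof to compare against; your argument is correct and is precisely the standard one underlying the cited result: the substitution $x=e^{-t}$ makes $T_\mu$ an isometry of $L^{p'}_{\{\mu\}}(\mathbb{R}_+)$ onto $L^{p'}(\mathbb{R})$ (consistent with the weight $v(x)=x^{\mu p'-1}$ in the paper's definition), the identity $T_\mu\bigl(\mathfrak{M}^{-1}_\mu F\bigr)=\check\Phi$ follows directly from \eqref{MINV}, and the claim reduces to Hausdorff--Young, obtained by Riesz--Thorin interpolation between the trivial $L^1\to L^\infty$ bound and the Plancherel $L^2\to L^2$ endpoint. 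You also correctly identify the one genuinely delicate point, namely that for $1<p<2$ the integral in \eqref{MINV} need not converge absolutely and ``exists'' must be read as the $L^{p'}$-limit of truncations over $\{|\nu|\le R\}$ (via density of $L^1\cap L^p$), which is exactly the sense in which the cited source asserts existence.
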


Here is an important inversion theorem.

\begin{thm}[\cite{DAMT}, Th. 7., 8.]\label{VoINV}
\begin{enumerate}[a)]
    \item If $f\in L_{\{\mu\}}(\mathbb{R}_+)$ and $f^*(\mu+\mathrm{i}\,\cdot)\in L(\mathbb{R})$ for some ~$\mu\in\mathbb{R}$, then $\mathfrak{M}^{-1}_\mu(\mathfrak{M}_\mu f)(x)=f(x)$ a.e. on $\mathbb{R}_+$ (if moreover $f\in C(\mathbb{R}_+)$, then it holds $\forall x\in\mathbb{R}_+$).
    \item If $f\in L_{(\alpha,\beta)}(\mathbb{R}_+)$ and $f^*(\mu+\mathrm{i}\,\cdot)\in L(\mathbb{R})$ for every ~$\mu\in(\alpha,\beta)$, then $\mathfrak{M}^{-1}_{\mu_1}=\mathfrak{M}^{-1}_{\mu_2}, ~\forall ~\mu_1, \mu_2\in(\alpha,\beta).$
    \item If $f, g\in L_{\{\mu\}}(\mathbb{R}_+)$ for some ~$\mu\in\mathbb{R}$ such that $f^*(\mu+\mathrm{i}\,\nu)=g^*(\mu+\mathrm{i}\,\nu), ~\forall \nu\in\mathbb{R}$, then
    $f=g$ a.e. on $\mathbb{R}_+$.
\end{enumerate}
\end{thm}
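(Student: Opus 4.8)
The plan is to transport the entire statement to the real line by the exponential substitution $x=e^{-t}$, under which \eqref{MT} becomes an ordinary Fourier transform, and then to invoke the classical Fourier inversion and uniqueness theorems. Fix $\mu\in\mathbb{R}$ and set $g(t):=f(e^{-t})\,e^{-\mu t}$. Substituting $x=e^{-t}$ in \eqref{MT} gives
$$f^*(\mu+\mathrm{i}\,\nu)=\int_{-\infty}^{\infty} g(t)\,e^{-\mathrm{i}\,\nu t}\dd t,$$
so $f^*(\mu+\mathrm{i}\,\cdot)$ is precisely the Fourier transform $\hat g$ of $g$. The same substitution yields the norm identity $\|g\|_{L(\mathbb{R})}=\|f\|_{L_{\{\mu\}}(\mathbb{R}_+)}$, so that $f\in L_{\{\mu\}}(\mathbb{R}_+)$ is equivalent to $g\in L(\mathbb{R})$, while the hypothesis $f^*(\mu+\mathrm{i}\,\cdot)\in L(\mathbb{R})$ reads $\hat g\in L(\mathbb{R})$.

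For part a) I would then apply the Fourier inversion theorem to $g$: with both $g$ and $\hat g$ integrable it gives $g(t)=\frac{1}{2\pi}\int_{-\infty}^{\infty}\hat g(\nu)\,e^{\mathrm{i}\,\nu t}\dd\nu$ for almost every $t$. Undoing the substitution, using $e^{\mathrm{i}\,\nu t}=x^{-\mathrm{i}\,\nu}$ and $e^{\mu t}=x^{-\mu}$, and comparing with \eqref{MINV} turns this exactly into $\mathfrak{M}^{-1}_\mu(\mathfrak{M}_\mu f)(x)=f(x)$ for a.e.\ $x>0$. If in addition $f\in C(\mathbb{R}_+)$, then $g$ is continuous and the right-hand side of the inversion formula is continuous, being the inverse transform of an $L(\mathbb{R})$ function; two continuous functions coinciding a.e.\ coincide everywhere, which upgrades the conclusion to all $x\in\mathbb{R}_+$.

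Part b) follows at once from part a): the hypotheses $f\in L_{(\alpha,\beta)}(\mathbb{R}_+)=\bigcap_\mu L_{\{\mu\}}(\mathbb{R}_+)$ and $f^*(\mu+\mathrm{i}\,\cdot)\in L(\mathbb{R})$ for every $\mu\in(\alpha,\beta)$ let me apply part a) at each such $\mu$, giving $\mathfrak{M}^{-1}_{\mu_1}(f^*)=f=\mathfrak{M}^{-1}_{\mu_2}(f^*)$ a.e.\ for any $\mu_1,\mu_2$. Alternatively one invokes Theorem \ref{ThH} to get $f^*\in\mathcal{H}(\mathrm{St}(\alpha,\beta))$ and shifts the vertical contour in \eqref{Minv} from $\Re s=\mu_1$ to $\Re s=\mu_2$ by Cauchy's theorem, the horizontal segments vanishing by the decay \eqref{NP}. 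For part c) I set $h(t):=(f-g)(e^{-t})\,e^{-\mu t}$, which lies in $L(\mathbb{R})$ since $f-g\in L_{\{\mu\}}(\mathbb{R}_+)$; the hypothesis says $\hat h\equiv 0$, and injectivity of the Fourier transform on $L(\mathbb{R})$ forces $h=0$ a.e., i.e.\ $f=g$ a.e.\ on $\mathbb{R}_+$.

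The main obstacle is the careful bookkeeping in part a): one must track the orientation-reversing substitution $x=e^{-t}$, verify the norm identity so that the integrability hypotheses translate correctly into the $L(\mathbb{R})$ conditions on $g$ and $\hat g$, and respect the a.e.-versus-pointwise distinction in the inversion. Once these are in place the remaining parts are a direct transcription of standard Fourier-analytic facts through the same substitution; the only point where the stronger hypotheses (holomorphy on the strip together with the uniform decay \eqref{NP}) would be genuinely required is the contour-shifting route to part b), which I can avoid entirely by deriving b) from a).
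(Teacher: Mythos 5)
Your proof is correct, but note that there is no internal proof in the paper to match: Theorem \ref{VoINV} is stated in the appendix as quoted background, with the citation \cite{DAMT} standing in for the argument. The comparison is therefore one of routes. The source the paper cites is a ``direct approach'' to the Mellin transform, meaning inversion and uniqueness are established intrinsically on the weighted spaces $L_{\{\mu\}}(\mathbb{R}_+)$ (Mellin translation, Mellin convolution, approximate identities), deliberately avoiding any reduction to Fourier analysis; you take exactly the opposite, classical route: the substitution $x=e^{-t}$, under which $f^{*}(\mu+\mathrm{i}\,\cdot)$ becomes the Fourier transform of $g(t)=f(e^{-t})e^{-\mu t}$, the norm identity $\|g\|_{L(\mathbb{R})}=\|f\|_{L_{\{\mu\}}(\mathbb{R}_+)}$ translating the hypotheses exactly, then $L^1$ Fourier inversion for a), Fourier injectivity for c), and a) applied at two abscissas for b). Your bookkeeping (orientation of the substitution, $e^{\mathrm{i}\nu t}=x^{-\mathrm{i}\nu}$, the a.e.\ versus pointwise distinction) is accurate, and deriving b) from a) rather than by contour shifting is the right call: the decay \eqref{NP} needed for the Cauchy-theorem argument is not among the hypotheses and would itself have to be established by a uniform Riemann--Lebesgue argument. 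One small refinement: as written, your b) yields $\mathfrak{M}^{-1}_{\mu_1}(f^{*})=\mathfrak{M}^{-1}_{\mu_2}(f^{*})$ only almost everywhere; since $\mathfrak{M}^{-1}_{\mu}(F)$ is continuous on $\mathbb{R}_+$ whenever $F(\mu+\mathrm{i}\,\cdot)\in L(\mathbb{R})$ (dominated convergence --- the same fact underlying your continuity upgrade in a)), the identity in fact holds at every $x>0$. As for what each approach buys: yours is short, self-contained, and rests only on textbook Fourier facts, which is entirely adequate for the three statements as this paper uses them; the ``direct'' development of the cited source additionally produces the Mellin-native machinery (strip holomorphy, the $L^p_{\{\mu\}}$ mapping theorems of Theorems \ref{ThH}--\ref{LP}) that the rest of the appendix also quotes, which is presumably why the authors cite rather than reprove.
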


Remember that it is often possible to use tables from a monograph \cite{oberhettinger}. It is also possible to use approximate methods or numerical methods, e.g. \cite{Tsamasphyros1976}, \cite{GABUTTI1991191}.

\end{document}